\documentclass{amsart}
\usepackage{amsmath,amsxtra,amsthm,amssymb,mathtools,stmaryrd}
\usepackage[b]{esvect}
\usepackage{tikz-cd}
\usepackage{tcolorbox}
\usepackage{hyperref}

\hypersetup{colorlinks=true, linkcolor=blue}

\newtheorem{thm}{Theorem}[section]
\newtheorem{prop}[thm]{Proposition}
\newtheorem{lem}[thm]{Lemma}
\newtheorem{rem}[thm]{Remark}

\DeclareMathOperator{\Hom}{Hom}
\DeclareMathOperator{\End}{End}
\DeclareMathOperator{\Ext}{Ext}

\DeclareMathOperator{\modcat}{mod}
\DeclareMathOperator{\Mod}{Mod}
\DeclareMathOperator{\Der}{Der}

\DeclareMathOperator{\tr}{tr}
\DeclareMathOperator{\Tr}{Tr}
\DeclareMathOperator{\Ev}{\mathbb E}
\DeclareMathOperator{\id}{id}
\DeclareMathOperator{\rad}{rad}

\newcommand{\iso}{\xrightarrow{\raisebox{-.5ex}[0ex][0ex]{$\scriptstyle{\sim}$}}}
\newcommand{\leftiso}{\xleftarrow{\raisebox{-.5ex}[0ex][0ex]{$\scriptstyle{\sim}$}}}

\newcommand*{\intref}[2]{\hyperref[#2]{#1~\ref*{#2}}}

\newcommand{\rmod}[1]{\kern-0.3ex\vv{#1}\kern-0.3ex}
\newcommand{\rmods}[1]{\vv{#1}}
\newcommand{\lmod}[1]{\reflectbox{$\rmod{\reflectbox{$#1$}}$}}
\newcommand{\lmods}[1]{\reflectbox{$\rmods{\reflectbox{$#1$}}$}}

\newcommand{\rlin}[1]{\kern0.3ex\textrm{-}#1}
\newcommand{\llin}[1]{#1\textrm{-}}

\title[The invariance of the AR Formula]{The invariance of the Auslander--Reiten Formula for hereditary algebras}
\author{Andrew Hubery}
\address{Bielefeld University\\33501 Bielefeld\\Germany}
\email{hubery@math.uni-bielefeld.de}
\subjclass{MSC-class: 16D90, 16E30 (Primary), 16G20, 16G70 (Secondary)}

\begin{document}

\begin{abstract}
We show that the Auslander--Reiten Formula for a finite dimensional hereditary algebra is invariant under the Auslander--Reiten translate.
\end{abstract}

\maketitle

\section{Introduction}

Let $\Lambda$ be a finite dimensional hereditary algebra over a field $k$, with Auslander--Reiten translate $\tau$ and inverse translate $\tau^-$. The Auslander--Reiten Formula says that there is a natural isomorphism of bifunctors
\[ \Ext^1(X,Y) \cong D\Hom(\tau^-Y,X) \]
for all finite dimsional modules $X,Y\in\modcat\Lambda$. Alternatively, we can express this as a bifunctorial perfect pairing
\[ \{-,-\}' \colon \Ext^1(X,Y)\times \Hom(\tau^-Y,X) \to k. \]
If now $X$ has no non-zero injective direct summands, then $\tau^-\zeta$ is again exact for all $\zeta\in\Ext^1(X,Y)$, so given $f\in\Hom(\tau^-Y,X)$, we can compute both $\{\zeta,f\}'$ and $\{\tau^-\zeta,\tau^-f\}'$. Our main result is the these two expressions always agree. Beyond settling this natural question, this result plays a key role in showing that, over the associated preprojective algebra $\Pi$, we have a natural isomorphism of bifunctors $\Ext^2_\Pi(X,Y)\cong D\Hom(Y,X)$ \cite{Hubery}.

We remark that our result holds for all finite dimensional hereditary algebras, and not just for those which are tensor algebras. This phenomenon can occur whenever the semisimple algebra $\Lambda/J(\Lambda)$ is not separable over $k$, which happens even for some tame hereditary algebras. Of course, if one restricts to tensor algebras, or even further to path algebras of quivers, then certain constructions admit canonical splittings which can be exploited to simplify the proof.

\subsection*{Acknowledgements}

This work was supported by Deutsche Forschungsgemeinschaft (Project-ID 491392403 -- TRR 358).

\section{Finite dimensional hereditary algebras}

We fix a base field $k$. Let $\Lambda$ be a finite dimensional hereditary algebra. It is known that we can always write $\Lambda=A\oplus J$ such that $A$ is a semisimple algebra and $J$ is the Jacobson radical \cite{Chase,Zaks2}. It follows that the epimorphism $J\to J/J^2$ splits as left $A$-modules, and also as right $A$-modules, but in general it will not be split as $A$-bimodules. In fact, this happens if and only if $\Lambda$ is isomorphic to the tensor algebra $T_A(J/J^2)$.

\begin{rem}
The natural map $J\to J/J^2$ splits as $A$-bimodules in the following situations.
\begin{enumerate}
\item The ext-quiver of $\Lambda$ is a tree. This includes all hereditary algebras of finite representation type. See \cite[Proposition 10.2]{DR1}.
\item There are no indecomposable projective modules $P,P'$ with $\rad^n(P,P')\neq0$ for $n=1,r$ for some $r>1$. See \cite[Section 5]{DR3}.
\item The $k$-algebra $A$ is separable. See \cite{Wedderburn}.
\end{enumerate}
On the other hand, there are finite dimensional hereditary algebras which are not tensor algebras. See \cite{Zaks1,DR3}, where it is shown that this fails even for tame hereditary algebras.
\end{rem}

Associated to $\Lambda$ is the fundamental short exact sequence
\[ \begin{tikzcd}
\mathbb P \ \colon &[-20pt]
0 \arrow[r] & \Omega \arrow[r,"i"] & \Lambda\otimes_A\Lambda \arrow[r,"p"] & \Lambda \arrow[r] & 0
\end{tikzcd} \]
where $p$ is the usual multiplication map. Note that this splits as left $\Lambda$-modules, and as right $\Lambda$-modules.

The kernel $\Omega$ is sometimes called the bimodule of noncommutative 1-forms on $\Lambda$ over $A$. Just as in the construction of the bar resolution we can identify $\Omega$ with the image of the bimodule homomorphism
\[ \Lambda\otimes_A\Lambda\otimes_A\Lambda \to \Lambda\otimes_A\Lambda, \quad
\lambda\otimes_A\mu\otimes_A\nu \mapsto \lambda\otimes_A\mu\nu - \lambda\mu\otimes_A\nu. \]
We write $d\mu$ for the image of $1\otimes_A\mu\otimes_A1$, so that $i(d\mu)=1\otimes_A\mu-\mu\otimes_A1$.

We observe that if $B$ is any $\Lambda$-bimodule, then $\Hom_{\Lambda\textrm{-}\Lambda}(\Omega,B)\cong\Der_A(\Lambda,B)$ is the space of all $A$-derivations $\Lambda\to B$. Explicitly these are the $A$-bimodule homomorphisms $d\colon\Lambda\to B$ satsifying the Leibniz rule $d(\lambda\mu)=\lambda d(\mu)-d(\lambda)\mu$, or equivalently those $k$-derivations $d\colon\Lambda\to B$ satisfying $d(a)=0$ for all $a\in A$ (cf. \cite[Section 10]{Ginzburg}, though the relative case is not considered there).

In the special situation that $\Lambda=T_A(M)$ is a tensor algebra, then $\Omega\cong\Lambda\otimes_AM\otimes_A\Lambda$, and $\Der_A(\Lambda,X)\cong\Hom_{A\textrm{-}A}(M,X)$. 

\begin{prop}\label{prop:standard-proj-inj}
Every right $\Lambda$-module $X$ admits a standard projective resolution
\[ \begin{tikzcd}
\mathbb P_X \ \colon &[-20pt]
0 \arrow[r] & X\otimes\Omega \arrow[r,"i_X"] & X\otimes_A\Lambda \arrow[r,"p_X"] & X \arrow[r] & 0
\end{tikzcd} \]
where $i_X(x\otimes d\lambda)=x\otimes_A\lambda-x\lambda\otimes_A1$ and $p_X(x\otimes_A\lambda)=x\lambda$, as well as a standard injective coresolution
\[ \begin{tikzcd}
\mathbb I_X \ \colon &[-20pt]
0 \arrow[r] & X \arrow[r,"j_X"] & \Hom_A(\Lambda,X) \arrow[r,"q_X"] & \Hom(\Omega,X) \arrow[r] & 0
\end{tikzcd} \]
where $j_X(x)(\lambda)=x\lambda$ and $q_X(h)(d\lambda)=h(\lambda)-h(1)\lambda$.
\end{prop}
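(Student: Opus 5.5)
The plan is to obtain $\mathbb P_X$ by applying $X\otimes_\Lambda-$ to the fundamental sequence $\mathbb P$, and to obtain $\mathbb I_X$ by applying $\Hom_\Lambda(-,X)$ (right $\Lambda$-module maps) to $\mathbb P$. In both cases we then identify the resulting terms and check that they are projective, respectively injective. Here $X\otimes\Omega$ means $X\otimes_\Lambda\Omega$ and $\Hom(\Omega,X)$ means $\Hom_\Lambda(\Omega,X)$, with $\Omega$ regarded as a left, respectively right, $\Lambda$-module.

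\emph{Exactness.} Since $\mathbb P$ splits as a sequence of left $\Lambda$-modules, the sequence $X\otimes_\Lambda\mathbb P$ is still exact. Similarly, since $\mathbb P$ splits as a sequence of right $\Lambda$-modules, $\Hom_\Lambda(\mathbb P,X)$ is exact. Both are sequences of right $\Lambda$-modules, using the remaining bimodule actions. We then make the natural identifications
\[ X\otimes_\Lambda\Lambda\otimes_A\Lambda\cong X\otimes_A\Lambda, \qquad X\otimes_\Lambda\Lambda\cong X, \]
\[ \Hom_\Lambda(\Lambda\otimes_A\Lambda,X)\cong\Hom_A(\Lambda,X), \qquad \Hom_\Lambda(\Lambda,X)\cong X. \]
The first two are given by $x\otimes\lambda\otimes\mu\mapsto x\lambda\otimes\mu$; the third is tensor-hom adjunction, with $h\mapsto (\lambda\mapsto h(\lambda\otimes 1))$. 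Transporting the maps along these identifications, we use $i(d\lambda)=1\otimes\lambda-\lambda\otimes 1$ and $p(\lambda\otimes\mu)=\lambda\mu$. This gives exactly the stated formulas: $i_X(x\otimes d\lambda)=x\otimes\lambda-x\lambda\otimes1$ and $p_X(x\otimes\lambda)=x\lambda$. Dually, $j_X(x)(\lambda)=x\lambda$, and $q_X(h)(d\lambda)=\tilde h(1\otimes\lambda-\lambda\otimes1)=h(\lambda)-h(1)\lambda$, where $\tilde h$ is the $\Lambda$-linear extension $\tilde h(\lambda\otimes\mu)=h(\lambda)\mu$. This bookkeeping is routine.

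\emph{Projectivity and injectivity.} Since $A$ is semisimple, $X$ is projective as a right $A$-module, so $X\otimes_A\Lambda$ is a projective right $\Lambda$-module. Likewise $X$ is injective over $A$, so $\Hom_A(\Lambda,X)$ is an injective right $\Lambda$-module, being right adjoint to the exact restriction functor. Now take $X$ finite dimensional, or at least use that $\Lambda$ is hereditary. The submodule $X\otimes_\Lambda\Omega$ of the projective module $X\otimes_A\Lambda$ is then projective. Dually, the quotient $\Hom_\Lambda(\Omega,X)$ of the injective module $\Hom_A(\Lambda,X)$ is injective, since over a hereditary ring submodules of projectives are projective and quotients of injectives are injective. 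For non-finitely generated modules, this uses that $\Lambda$ is right (and left) hereditary, which holds since $\Lambda$ is finite dimensional.

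The only point needing care is the injectivity of $i_X$ and the surjectivity of $q_X$, which is where the one-sided splittings of $\mathbb P$ are essential. The splitting as left modules is given by $\lambda\mapsto\lambda\otimes1$, and as right modules by $\lambda\mapsto 1\otimes\lambda$. Hereditarity is then used only for the projectivity of the left-hand term and the injectivity of the right-hand term.
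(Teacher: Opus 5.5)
Your proof follows the paper's route exactly. You obtain $\mathbb P_X=X\otimes\mathbb P$ and $\mathbb I_X$ from $\mathbb P$ using its one-sided splittings, the middle terms are projective/injective because $A$ is semisimple, and the outer terms inherit this because $\Lambda$ is hereditary.

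There is one sign slip. With $\tilde h(\lambda\otimes\mu)=h(\lambda)\mu$ you get $\tilde h(1\otimes\lambda-\lambda\otimes1)=h(1)\lambda-h(\lambda)$, so $\Hom(i,X)$ transports to $-q_X$, not $q_X$. This does not affect exactness, since negating a map leaves its kernel and image unchanged. It is why the paper says $\Hom(\mathbb P,X)$ equals $\mathbb I_X$ only ``up to sign''. The choice $q_X=-\Hom(i,X)$ is deliberate and is needed later in \intref{Lemma}{lem:proj-inj-comparison}.
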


Here, and elsewhere except in \intref{Section}{sec:semisimple}, unadorned tensor products, and homomorphism and extension spaces, will be over $\Lambda$.

\begin{proof}
We have $\mathbb P_X=X\otimes\mathbb P$, and this is exact as $\mathbb P$ splits as a sequence of left $\Lambda$-modules. As $A$ is semisimple the middle term $X\otimes_A\Lambda$ is a projective right $\Lambda$-module, and hence so too is $X\otimes\Omega$ since $\Lambda$ is hereditary.

Similarly, we have the short exact sequence $\Hom(\mathbb P,X)$, and up to sign this equals $\mathbb I_X$. Again, the middle term is an injective right $\Lambda$-module, and hence so too is $\Hom(\Omega,X)$.
\end{proof}

\begin{rem}
We note that, regarding $\mathbb P$ as a chain complex in degrees $\{-2,-1,0\}$, and the module $X$ as a stalk complex in degree 0, the sign conventions from \cite[Section 0FNG]{Stacks} tell us that $\Hom(\mathbb P,X)$ is a chain complex in degrees $\{0,1,2\}$ having the differentials $-\Hom(p,X)$ and $\Hom(i,X)$. We have chosen instead to take $j_X=\Hom(p,X)$ and $q_X=-\Hom(i,X)$. The important point, though, is that there is some sign involved, so we should not take both $\Hom(p,X)$ and $\Hom(i,X)$. This viewpoint is further justified \intref{Lemma}{lem:proj-inj-comparison}.
\end{rem}

\begin{rem}
The dual result for left $\Lambda$-modules also holds.
\end{rem}

\begin{rem}
For a projective module $P$ we know that both $P$ and $(P/PJ)\otimes_A\Lambda$ are projective covers of $P/PJ$, so are isomorphic. In particular, we can apply this to $X\otimes\Omega$ to deduce that $X\otimes\Omega\cong X\otimes_AM\otimes_A\Lambda$, where $M\coloneqq J/J^2$.

For, considering the standard projective resolution $\mathbb P\otimes A$ gives $\Omega\otimes A\cong J$ as $\Lambda$-$A$-bimodules, so $X\otimes\Omega\otimes A\cong X\otimes J$ as right $A$-modules. Also, as $\Lambda$ is hereditary, $J^i$ is projective, and hence $X\otimes J^i\cong XJ^i$. Thus, as right $A$-modules, we have $XJ\cong\bigoplus_{i\geq1}XJ^i/XJ^{i+1}$.

Similarly, we have
\begin{multline*}
(XJ^{i-1}/XJ^i)\otimes_A(J/J^2) \cong (XJ^{i-1}/XJ^i)\otimes(J/J^2)\\
\cong (XJ^{i-1}\otimes J)/(XJ^i\otimes J+XJ^{i-1}\otimes J^2) \cong XJ^i/XJ^{i+1}.
\end{multline*}
Thus
\[ X\otimes_AM \cong \bigoplus_{i\geq1}(XJ^{i-1}/XJ^i)\otimes_A(J/J^2) \cong \bigoplus_{i\geq1}XJ^i/XJ^{i+1}, \]
proving the claim.

In general, however, the isomorphism $X\otimes\Omega\cong X\otimes_AM\otimes_A\Lambda$ is not natural in $X$. In fact, if it is natural in $X$, then taking $X=A$ yields an isomorphism of $A$-$\Lambda$-bimodules $J\cong A\otimes\Omega\cong M\otimes_A\Lambda$. The canonical map $J\to J/J^2$ thus becomes $M\otimes_A\Lambda\to M$, which is clearly split as $A$-bimodules, and hence $\Lambda=T_A(M)$ is a tensor algebra. The converse also holds.
\end{rem}

\section{Duality for semisimple algebras}\label{sec:semisimple}

We recall some basic results concerning duality for semisimple algebras.

A finite dimensional algebra $A$ is symmetric provided it is isomorphic, as $A$-bimodules, to its dual $DA=\Hom_k(A,k)$. By considering the image of the identity it is clear that $A$-bimodule homomorphisms $A\to DA$ are in bijection with elements $\sigma\in DA$ satisfying $\sigma(ab)=\sigma(ba)$, and we have an isomorphism if and only if $\sigma(a-)=0$ implies $a=0$. Such $\sigma\in DA$ are called symmetrising elements.

For convenience we prove the following standard lemma.

\begin{lem}
Every finite dimensional semisimple algebra is symmetric.
\end{lem}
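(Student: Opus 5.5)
We reduce to the simple case via Wedderburn--Artin and then build a symmetrising element from a reduced trace. By Wedderburn--Artin, a finite dimensional semisimple $k$-algebra is $A\cong\prod_{i=1}^r M_{n_i}(D_i)$ with each $D_i$ a finite dimensional division $k$-algebra. If $\sigma_i$ is a symmetrising element for each factor, then $\sigma=\sum_i\sigma_i\circ\pi_i$ is one for $A$, where $\pi_i$ denotes the projection. Indeed, the symmetry $\sigma(ab)=\sigma(ba)$ holds componentwise. For nondegeneracy, if $\sigma(a-)=0$ then testing against elements supported in a single factor gives $\sigma_i(a_i-)=0$ for each $i$, so $a=0$. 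Thus it suffices to treat $A=M_n(D)$.

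Next we reduce from $M_n(D)$ to $D$. Suppose $\tau\in DD$ satisfies $\tau(de)=\tau(ed)$ and $\tau(d-)\neq0$ for $d\neq0$. Put $\sigma(a)=\tau(\sum_j a_{jj})$, which is $\tau$ composed with the usual matrix trace. The trace identity $\sum_j(ab)_{jj}=\sum_{j,l}a_{jl}b_{lj}$ does \emph{not} equal $\sum_j(ba)_{jj}$ in general when $D$ is noncommutative. However, after applying $\tau$ we get $\sum_{j,l}\tau(a_{jl}b_{lj})=\sum_{j,l}\tau(b_{lj}a_{jl})=\tau(\tr(ba))$, so $\sigma$ is symmetric. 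For nondegeneracy, if $a\neq0$ pick an entry $a_{jl}\neq0$ and $d$ with $\tau(a_{jl}d)\neq0$. Then $b=dE_{lj}$ gives $\sigma(ab)=\tau(a_{jl}d)\neq0$.

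It remains to find a symmetrising element on the division algebra $D$; this is the main point. Let $Z$ be the centre of $D$, a finite field extension of $k$. I would take $\tau=\phi\circ\mathrm{trd}$, where $\mathrm{trd}\colon D\to Z$ is the reduced trace and $\phi\in\Hom_k(Z,k)$ is any nonzero functional. Here are the steps in order:
\begin{enumerate}
\item After extending scalars to a splitting field $L\supseteq Z$ we have $D\otimes_Z L\cong M_m(L)$. The reduced trace is the restriction of the matrix trace, takes values in $Z$, and satisfies $\mathrm{trd}(de)=\mathrm{trd}(ed)$.
\item The pairing $(d,e)\mapsto\mathrm{trd}(de)$ is nondegenerate over $Z$, since after base change it becomes the nondegenerate trace form on $M_m(L)$.
\item Composing with $\phi$ keeps symmetry. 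For nondegeneracy, if $d\neq0$ then $\mathrm{trd}(d-)\colon D\to Z$ is a nonzero $Z$-linear map, hence surjective. So some $e$ has $\phi(\mathrm{trd}(de))\neq0$.
\end{enumerate}

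The obstacle is avoiding separability: the field trace $Z\to k$ can vanish identically for inseparable $Z/k$, which matters in this paper. That is why I use an arbitrary nonzero $k$-linear $\phi$ rather than a trace.

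A quicker alternative for $D$ also avoids reduced traces. Nondegeneracy is automatic for any $\tau$ vanishing on $[D,D]$ but not identically zero: if $d\neq0$ then $d$ is invertible, so $\tau(d-)$ is already nonzero by taking the argument $d^{-1}e$ with $\tau(e)\neq0$. So it suffices to show that the $k$-span $[D,D]$ is a proper subspace of $D$. I would deduce this from the reduced trace (it vanishes on commutators and $\mathrm{trd}(1)=m\neq0$ in characteristic $0$, or more robustly $\mathrm{trd}$ is surjective onto $Z$ by step 2). Then I would take $\tau$ to be any nonzero functional on $D$ killing $[D,D]$.
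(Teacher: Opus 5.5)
Your proof is correct. Its skeleton (Wedderburn--Artin, then products, then matrix rings, then division algebras) is the paper's. At the matrix-ring step you are more careful than the paper, which simply states the formula $\sum_i\sigma(a_{ii})$; you explain why composing a symmetric functional on $D$ with the matrix trace stays symmetric although $D$ is noncommutative.

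Your ``quicker alternative'' for $D$ is exactly the paper's argument: take any nonzero functional vanishing on the commutator span, with nondegeneracy coming free from invertibility in $D$. The one real difference is how you show the commutator span is proper.

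The paper avoids the reduced trace entirely. It base-changes to a splitting field $L$ of the centre $K$ and notes that $L\otimes_K C(B)\subseteq C(M_n(L))$ consists of trace-zero matrices. A dimension count then gives $\dim_K C(B)\le n^2-1<\dim_K B$, with no need to know that $\mathrm{trd}$ is $Z$-valued.

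Your main route $\phi\circ\mathrm{trd}$ does rely on that standard descent fact. In exchange, its nondegeneracy comes from the nondegenerate reduced trace form rather than from $D$ being a division ring. So it works verbatim for any central simple algebra, in particular $M_n(D)$ directly, which would make your matrix-ring reduction unnecessary. You also correctly choose an arbitrary nonzero $\phi\colon Z\to k$ rather than the field trace, which handles the inseparable case the paper cares about.
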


\begin{proof}
If $A$  and $A'$ have symmetrising elements $\sigma$ and $\sigma'$ respectively, then the matrix ring $M_n(A)$ has symmetrising element $(a_{ij})\mapsto\sum_i\sigma(a_{ii})$, and the product $A\times A'$ has symmetrising element $(a,a')\mapsto\sigma(a)+\sigma'(a')$. By the Artin--Wedderburn Theorem is is thus enough to show that every finite dimensional division algebra $B$ is symmetric.

Let $K$ be the centre of $B$, and $L/K$ a splitting field for $B/K$. Then $L\otimes_KB\cong M_n(L)$ is a matrix ring, and writing $C(B)$ for the $K$-subspace spanned by all commutators $[a,b]=ab-ba$ we clearly have $L\otimes_KC(B)\subseteq C(L\otimes_KB)$. The latter is a proper subspace of $M_n(L)$, since not all matrices have trace zero. Now every nonzero $\sigma\in DB$ vanishing identically on $C(B)$ is a symmetrising element.
\end{proof}

For the remainder of this section we will fix a semisimple algebra $A$ with symmetrising element $\sigma$. Moreover, all unadorned homomorphism spaces and tensor products will be over $A$.

\begin{lem}
Let $X$ and $Y$ be right $A$-modules with $X$ finite dimensional. Then there are isomorphisms
\[ \Hom(X,Y) \leftiso Y\otimes\Hom(X,A) \iso Y\otimes DX \]
sending $y\otimes f$ in the middle to the map $x\mapsto yf(x)$ on the left, and the element $y\otimes\sigma f$ on the right.
\end{lem}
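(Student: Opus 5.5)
Both maps are natural in $X$ and additive, so I would first check that they are well defined and natural, and then reduce to the case $X=A$ (or a direct summand of $A$).

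\emph{Setup.} Here $\Hom(X,A)$ is a left $A$-module via $(af)(x)=af(x)$, and $DX=\Hom_k(X,k)$ is a left $A$-module via $(a\xi)(x)=\xi(xa)$. The notation $\sigma f$ means $\sigma\circ f\colon X\to k$.

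\emph{Well-definedness.} The left-hand map $y\otimes f\mapsto(x\mapsto yf(x))$ is balanced, since $ya\otimes f$ and $y\otimes af$ both go to $x\mapsto yaf(x)$. Its image is right $A$-linear in $x$ because $f$ is. For the right-hand map, the assignment $f\mapsto\sigma\circ f$ is a homomorphism of left $A$-modules $\Hom(X,A)\to DX$. This uses that $\sigma$ is symmetrising:
\[ (\sigma\circ af)(x)=\sigma(af(x))=\sigma(f(x)a)=\sigma(f(xa))=(a(\sigma\circ f))(x). \]
So $y\otimes f\mapsto y\otimes\sigma f$ is well defined. Both maps are visibly natural in $X$, and they are additive in $X$.

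\emph{Reduction.} Since $A$ is semisimple and $X$ is finite dimensional, $X$ is a direct summand of $A^n$ for some $n$. Both sides commute with finite direct sums and pass to direct summands, via the idempotent endomorphisms of $A^n$ and naturality. So it suffices to treat $X=A$.

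\emph{The case $X=A$.}
\begin{enumerate}
\item Identify $\Hom(A,A)\cong A$ as left $A$-modules, with $f\mapsto f(1)$. The left map then becomes $Y\otimes A\to\Hom(A,Y)\cong Y$, $y\otimes a\mapsto ya$, which is the standard isomorphism.
\item Under the same identification, the right map becomes $\mathrm{id}_Y\otimes\phi$, where $\phi\colon A\to DA$ is $a\mapsto\sigma(a\,-)$. This is the bimodule isomorphism coming from the symmetrising element, as recalled at the start of this section. Tensoring an isomorphism gives an isomorphism.
\end{enumerate}

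The only step needing care is the compatibility of module structures on $DX$ and the balancing. This is exactly where the identity $\sigma(ab)=\sigma(ba)$ is used; everything else is formal.
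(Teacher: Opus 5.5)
Your proof is correct and is essentially the paper's argument written out in full. The paper gets the left isomorphism from $X$ being finitely generated projective over $A$, which is exactly your reduction to $X=A$, and the right one from the isomorphism $A\iso DA$ given by $\sigma$, which is your step (2). Your explicit check that $\sigma(ab)=\sigma(ba)$ makes $f\mapsto\sigma f$ left $A$-linear is the detail the paper leaves implicit; the paper's phrasing suggests reading the right map as $\Hom(X,A)\cong\Hom(X,DA)\cong D(X\otimes A)=DX$ by adjunction, which needs no reduction to $X=A$.
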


\begin{proof}
The isomorphism on the left follows since $X$ is a finitely generated projective $A$-module. The map on the right is induced by the isomorphism $A\iso DA$.
\end{proof}

We write $\xi\mapsto\rmod\xi$ for the inverse of the isomorphism $\Hom(X,A)\iso DX$, $f\mapsto\sigma f$. Thus $\rmod\xi(xa)=\rmod\xi(x)a$ and $\sigma(\rmod\xi(x))=\xi(x)$. Note also that this is an isomorphic of left $A$-modules, so $\rmods{a\xi}=a\cdot\rmod\xi$, which is the map $x\mapsto a\rmod\xi(x)$.

Taking $X=Y$ in the lemma yields the special case $\End(X)\cong X\otimes DX$. Let $\sum_ix_i\otimes\xi_i$ correspond to the identity on $X$, so that $x=\sum_ix_i\rmod\xi_i(x)$ for all $x\in X$. We can then write the isomorphism from the lemma as
\[ \Hom(X,Y) \iso Y\otimes DX, \quad f \mapsto \sum_if(x_i)\otimes\xi_i. \]
For, we just need to check that $f=\sum_if(x_i)\rmod\xi_i$, which follows from
\[ \sum f(x_i)\rmod\xi_i(x) = \sum f(x_i\rmod\xi_i(x)) = f(x). \]
Combining with the standard isomorphism $Y\otimes DX\cong D\Hom(Y,X)$ we see that $\Hom(X,Y)$ and $\Hom(Y,X)$ are naturally dual to one another. Explicitly, this sends the pair $f\colon X\to Y$ and $g\colon Y\to X$ to $\sum_i\xi_i(gf(x_i))$.

\begin{rem}
\begin{enumerate}
\item There is an analogous result for left modules. In this case we write the isomorphism as
\[ DX \iso \Hom(X,A), \quad \xi\mapsto\lmod\xi, \]
and for an arbitrary left module $Y$ we have $\Hom(X,Y)\cong DX\otimes Y$.
\item Given a finite dimensional right module $X$, and a left module $Y$, we have the isomorphism
\[ X\otimes Y \cong \Hom(DX,Y), \quad x\otimes y\mapsto\big(\xi\mapsto\rmod\xi(x)y\big). \]
\end{enumerate}
\end{rem}

\section{The Auslander--Reiten Formula}

The Auslander--Reiten translate $\tau=D\Tr$ is the endofunctor of $\modcat\Lambda$ given by
\[ \tau X \coloneqq D\Ext^1(X,\Lambda), \]
and there is a natural isomorphism of bifunctors
\[ \Ext^1(X,Y) \cong D\Hom(Y,\tau X). \]
This we can alternatively express as a bifunctorial perfect pairing
\[ \{-,-\} \colon \Hom(Y,\tau X)\times\Ext^1(X,Y) \to k. \]
Note that the bifunctoriality implies that
\[ \{f,y\zeta\} = \{fy,\zeta\} \quad\textrm{for $\zeta\in\Ext^1(X,Y)$, $f\colon Y'\to\tau X$, $y\colon Y\to Y'$} \]
and
\[ \{g,\zeta x\} = \{\tau(x)g,\zeta\} \quad\textrm{for $\zeta\in\Ext^1(X,Y)$, $g\colon Y\to\tau X'$, $x\colon X'\to X$}. \]

The functor $\tau$ admits a left adjoint $\tau^-=\Tr D$, given on $\modcat\Lambda$ by
\[ \tau^-X \coloneqq \Ext^1(DY,\Lambda), \]
and so yields the bifunctorial perfect pairing
\[ \{-,-\}' \colon \Ext^1(X,Y)\times\Hom(\tau^-Y,X) \to k, \quad \{\zeta,f\}' \coloneqq \{\tilde f,\zeta\}. \]
Here and in the sequel, given an adjoint pair of endofunctors $(F,G)$ on $\modcat\Lambda$, we will denote by $\tilde f\in\Hom(Y,GX)$ the morphism corresponding to $f\in\Hom(FY,X)$, so that $\tilde f=Gf\cdot u_Y$ with $u\colon\id\Rightarrow GF$ the unit of the adjunction. 

Together, the isomorphisms
\[ D\Hom(Y,\tau X) \cong \Ext^1(X,Y) \cong D\Hom(\tau^-Y,X) \]
constitute the Auslander--Reiten Formula. Expressed as perfect pairings we then have the following question concerning invariance under the translate.

Given $\zeta\in\Ext^1(X,Y)$ with $\Hom(D\Lambda,X)=0$,we know $\tau^-\zeta\in\Ext^1(\tau^-X,\tau^-Y)$ is again exact. How then, for $f\in\Hom(\tau^-Y,X)$, are $\{\zeta,f\}'$ and $\{\tau^-\zeta,\tau^-f\}'$ related?

\subsection{Rewriting the Auslander--Reiten translate}

In order to tackle this question we will need a more convenient form for the functors $\tau^\pm$, which will have the dual advantages of being left/right symmetric, as well as providing alternative descriptions of the pairings that are more amenable to computations. To this end we define the exact sequence of $\Lambda$-bimodules
\[ \begin{tikzcd}
\mathbb T \ \colon &[-20pt]
\Lambda\otimes_A\Lambda \arrow[r,"\tilde c"] & \mho \arrow[r,"b"] & \Pi_1 \arrow[r] & 0
\end{tikzcd} \]
where
\[ \mho \coloneqq D(D\Lambda\otimes\Omega\otimes D\Lambda) \]
and $c\coloneqq\tilde c(1\otimes_A1)$ is the map
\[ \theta\otimes d\lambda\otimes\phi \mapsto \theta(\lmod\phi(\lambda)) - \phi(\rmod\theta(\lambda)). \]

We then have the two bimodule isomorphims
\[ \Phi_R \colon \mho \iso \Hom_{\rlin\Lambda}(D\Lambda\otimes\Omega,\Lambda) \quad\textrm{and}\quad
\Phi_L \colon \mho \iso \Hom_{\llin\Lambda}(\Omega\otimes D\Lambda,\Lambda). \]
Here we have used the notation $\Hom_{\rlin\Lambda}(-,-)$ to emphasise that we are dealing with homomorphisms of right $\Lambda$-modules, and similarly $\Hom_{\llin\Lambda}(-,-)$ for homomorphisms of left $\Lambda$-modules.

By definition we then have, for all $\theta,\phi\in D\Lambda$,
\[ \phi(\Phi_R(\gamma)(\theta\otimes d\lambda)) = \gamma(\theta\otimes d\lambda\otimes\phi)
= \theta(\Phi_L(\gamma)(d\lambda\otimes\phi)). \]
In particular,
\[ \Phi_R(c)(\theta\otimes d\lambda) = \rmod\theta(1)\lambda-\rmod\theta(\lambda) \quad\textrm{and}\quad
\Phi_L(c)(d\lambda\otimes\phi) = \lmod\phi(\lambda) - \lambda\,\lmod\phi(1). \]
Here we have used, for example, that
\[ \theta(\,\lmod\phi(\lambda)) = \sigma(\rmod\theta(1)\lmod\phi(\lambda)) = \phi(\rmod\theta(1)\lambda). \]

\begin{rem}
Suppose $\Lambda=T_A(M)$ is a tensor algebra, so that $\Omega=\Lambda\otimes_AM\otimes_A\Lambda$ and $\mho=\Lambda\otimes_ADM\otimes_A\Lambda$. Let us write
\[ \id_{M_A} = \sum_i m_i\otimes_A\mu_i \in M\otimes_ADM \quad\textrm{and}\quad
\id_{{}_AM} = \sum_j \nu_j\otimes_An_j \in DM\otimes_AM. \]
Then, under the natural isomorphisms
\begin{multline*}
\Lambda\otimes_ADM\otimes_A\Lambda \cong \Hom_{\rlin A}(M,\Lambda)\otimes_A\Lambda \cong
\Hom_{\rlin A}(D\Lambda,\Hom_{\rlin A}(M,\Lambda))\\
\cong \Hom_{\rlin A}(D\Lambda\otimes_AM,\Lambda) \cong D(D\Lambda\otimes_AM\otimes_AD\Lambda)
\end{multline*}
the element $\sum_i m_i\otimes_A\mu_i\otimes_A1$ in $\mho$ corresponds first to the element $\iota\otimes_A1$, where $\iota\colon M\rightarrowtail\Lambda$ is the inclusion map, then to the morphism $\theta\otimes_Am\mapsto\lmod\theta(1)m$, and finally to the map
\[ D\Lambda\otimes_AM\otimes_AD\Lambda \to k, \quad
\theta\otimes_Am\otimes_A\phi \mapsto \phi(\,\lmod\theta(1)m) = \theta(\,\lmod\phi(m)). \]
Note that $\lmod\theta(1)=\rmod\theta(1)$ for all $\theta\in D\Lambda$, so this agrees with the earlier computation.

Similarly, the element $\sum_j1\otimes_A\nu_j\otimes_An_j$ in $\mho$ corresponds to the map
\[ D\Lambda\otimes_AM\otimes_AD\Lambda \to k, \quad
\theta\otimes_Am\otimes_A\phi \mapsto \theta(m\rmod\phi(1)) = \phi(\rmod\theta(m)). \]
We deduce that
\[ c = \sum_im_i\otimes_A\mu_i\otimes_A1 - \sum_j1\otimes_A\nu_j\otimes_An_j = \id_{M_A} - \id_{{}_AM}. \]
As a special case, assume $\Lambda=kQ$ is the path algebra of an (acyclic) quiver. Then $A=kQ_0\cong k^n$, and  $M=kQ_1=\bigoplus ka$ is the $A$-bimodule of arrows. Thus $DM=\bigoplus ka^\ast$ has basis the dual arrows, and
\[ c = \sum a\otimes_Aa^\ast\otimes_A1 - \sum 1\otimes_Aa^\ast\otimes_Aa = \sum [a,a^\ast]. \]
\end{rem}

For a right $\Lambda$-module $X$ we have a $\Lambda$-bimodule homomorphism
\[ DX\otimes_kX \to D\Lambda, \quad \xi\otimes_kx \mapsto \big([\xi,x] \colon \lambda \mapsto \xi(x\lambda)\big). \]

\begin{lem}
The right $\Lambda$-module $X\otimes\mho$ is projective for all $X\in\Mod\Lambda$. Moreover, for all $X,Y\in\Mod\Lambda$ with $X$ finite dimensional, there is an isomorphism
\[ Y\otimes\Omega\otimes DX \iso \Hom(X\otimes\mho,Y) \]
sending the element $y\otimes d\lambda\otimes\xi$ to the morphism
\[ x\otimes\gamma \mapsto y\Phi_L(\gamma)(d\lambda\otimes[\xi,x]). \]
\end{lem}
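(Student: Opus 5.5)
We first show that $X\otimes\mho$ is projective, and then obtain the isomorphism by a chain of natural isomorphisms, each of which we can check against the explicit formula.

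For projectivity, use $\Phi_L$ to write $\mho\cong\Hom_{\llin\Lambda}(\Omega\otimes D\Lambda,\Lambda)$ as a bimodule. By \intref{Proposition}{prop:standard-proj-inj} and its left-module dual, $\Omega$ is projective as a left $\Lambda$-module, since it is a summand of $\Lambda\otimes_A\Lambda$ because $\mathbb P$ splits on the left. Hence $\Omega\otimes D\Lambda$ is a finitely generated projective left $\Lambda$-module, being a summand of $\Lambda\otimes_AD\Lambda$. Therefore
\[ X\otimes\mho\cong X\otimes\Hom_{\llin\Lambda}(\Omega\otimes D\Lambda,\Lambda)\cong\Hom_{\llin\Lambda}(\Omega\otimes D\Lambda,X), \]
and the second isomorphism, $x\otimes\gamma\mapsto(w\mapsto x\,\Phi_L(\gamma)(w))$, holds for any $X$ because $\Omega\otimes D\Lambda$ is finitely generated projective. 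This right module is a summand of $\Hom_{\llin\Lambda}(\Lambda\otimes_AD\Lambda,X)\cong\Hom_A(D\Lambda,X)$ with its right $\Lambda$-structure coming from $D\Lambda$. Since $D\Lambda$ is finitely generated projective over the semisimple algebra $A$, we have $\Hom_A(D\Lambda,X)\cong X\otimes_A\Hom_A(D\Lambda,A)\cong X\otimes_A\Lambda$, using $\Hom_{\rlin A}(D\Lambda,A)\cong DD\Lambda=\Lambda$ from Section~\ref{sec:semisimple}. This is projective, so $X\otimes\mho$ is projective.

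For the isomorphism, compute
\[ \Hom(X\otimes\mho,Y)\cong\Hom(X,\Hom(\mho,Y)) \]
by tensor--hom adjunction. Since $\mho$ is finitely generated projective as a left $\Lambda$-module (being dual via $\Phi_L$ to the projective $\Omega\otimes D\Lambda$), we have $\Hom(\mho,Y)\cong Y\otimes\Hom_{\llin\Lambda}(\mho,\Lambda)\cong Y\otimes\Omega\otimes D\Lambda$, using reflexivity of finitely generated projectives. Finally, since $X$ is finite dimensional,
\[ \Hom(X,Y\otimes\Omega\otimes D\Lambda)\cong Y\otimes\Omega\otimes\Hom(X,D\Lambda)\cong Y\otimes\Omega\otimes DX, \]
where $\Hom(X,D\Lambda)\cong D(X\otimes\Lambda)=DX$. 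The first step here is legitimate because $\Hom_\Lambda(X,-)$ commutes with $Y\otimes\Omega\otimes-$: one checks this by presenting $Y\otimes\Omega$ via projective left/right structures, or more cleanly by swapping the order and arguing with $\Omega$ projective on each side.

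Tracing $y\otimes d\lambda\otimes\xi$ through the chain, the element of $\Hom(X,D\Lambda)$ corresponding to $\xi$ is $x\mapsto[\xi,x]$. The pairing $\Omega\otimes D\Lambda$ against $\mho$ is $\Phi_L$. So the composite sends $x\otimes\gamma$ to $y\,\Phi_L(\gamma)(d\lambda\otimes[\xi,x])$, as claimed.

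The main obstacle is the interchange $\Hom(X,Y\otimes\Omega\otimes D\Lambda)\cong Y\otimes\Omega\otimes DX$ for arbitrary, possibly infinite dimensional, $Y$. I would handle it by first fixing $Y=\Lambda$ and using additivity: both sides commute with arbitrary direct sums in $Y$, because $X$ and $X\otimes\mho$ are finitely generated. Both sides are also right exact in $Y$, the right side since it is a tensor product and the left side since $X\otimes\mho$ is projective. Then extend from free modules to arbitrary $Y$ via a free presentation and the five lemma, checking that the explicit formula is natural in $Y$. For $Y=\Lambda$ the statement reduces to finite-dimensional duality bookkeeping, which I would verify using $\Phi_L$ and the identification $\Hom_{\llin\Lambda}(\Omega\otimes D\Lambda,\Lambda)\cong\mho$.
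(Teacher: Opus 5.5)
Your projectivity argument fails at its first step. Under $\Phi_L$, the left $\Lambda$-action on $\mho$ is the right action on the $D\Lambda$ factor of the source: $\Phi_L(\lambda\gamma)(d\mu\otimes\phi)=\Phi_L(\gamma)(d\mu\otimes\phi\lambda)$. It is not multiplication in the target, yet this is the action that $X\otimes\mho$ is balanced against. So your map $x\otimes\gamma\mapsto(w\mapsto x\,\Phi_L(\gamma)(w))$ is not well defined: $x\lambda\otimes\gamma$ and $x\otimes\lambda\gamma$ go to $w\mapsto x\lambda\Phi_L(\gamma)(w)$ and $w\mapsto x\Phi_L(\gamma)(w\lambda)$. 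Also, $\Hom_{\llin\Lambda}(\Omega\otimes D\Lambda,X)$ makes no sense for a right module $X$. Using $\Phi_R$ would give a correct isomorphism $X\otimes\mho\cong\Hom_{\rlin\Lambda}(D\Lambda\otimes\Omega,X)$.

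The conclusion you then draw is false in general: $X\otimes\mho$ need not be a direct summand of $\Hom_A(D\Lambda,X)\cong X\otimes_A\Lambda$. Take $X$ simple projective but not injective, e.g.\ over $k(1\to2)$. Then $X\otimes_A\Lambda\cong X$. Applying $X\otimes-$ to $\mathbb T$ gives an epimorphism $X\otimes\mho\twoheadrightarrow X\otimes\Pi_1\cong\tau^-X\neq0$. If $X\otimes\mho\cong X$, then $\tau^-X$ would be a nonzero quotient of the simple projective $X$, hence projective, which is impossible. So $X\otimes\mho$ is neither $0$ nor $X$.

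Your intermediate claim is also unjustified: $\Omega\otimes D\Lambda$ need not be a summand of $\Lambda\otimes_AD\Lambda$. The left splitting of $\mathbb P$ cannot be tensored on the right with $D\Lambda$. This module is projective only because it is a submodule of a projective and $\Lambda$ is hereditary. The missing ingredient is genuinely non-formal. The paper uses that $\Hom_{\llin\Lambda}(\Omega,\Lambda)$ is injective, which for finite dimensional $X$ identifies $X\otimes\mho$ with $\Hom(\Omega\otimes DX,\Lambda)$, the dual of a finitely generated projective. It then writes an arbitrary $X$ as a directed union of finite dimensional submodules, so $X\otimes\mho$ is flat, hence projective because $\Lambda$ is perfect.

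By contrast, your chain for the isomorphism is sound once the sides are corrected. You need $\mho$ finitely generated projective as a \emph{right} module, which is exactly what $\Phi_L$ gives, and then $\Hom_{\rlin\Lambda}(\mho,\Lambda)\cong\Omega\otimes D\Lambda$; you wrote ``left'' and $\Hom_{\llin\Lambda}$. The interchange you flag as the main obstacle is immediate. $Y\otimes\Omega$ is projective for every $Y$ by \intref{Proposition}{prop:standard-proj-inj}, and $X$ is finitely generated, so $\Hom(X,(Y\otimes\Omega)\otimes D\Lambda)\cong(Y\otimes\Omega)\otimes\Hom(X,D\Lambda)$.

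Unlike the paper, which derives the isomorphism from projectivity via $Y\otimes\Hom(X\otimes\mho,\Lambda)\cong\Hom(X\otimes\mho,Y)$, this route does not need projectivity, and it actually repairs your first part. The resulting natural isomorphism $\Hom(X\otimes\mho,-)\cong-\otimes(\Omega\otimes DX)$ is exact because $\Omega\otimes DX$ is a projective left module. Hence $X\otimes\mho$ is projective for finite dimensional $X$, and you would still need the colimit argument for arbitrary $X$. Your Watts-type fallback, on the other hand, presupposes exactness of $\Hom(X\otimes\mho,-)$, which is precisely what you had not established.
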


\begin{proof}
Suppose first that $X$ is finite dimensional. Using $\Phi_L$ we have
\[ X\otimes\mho \cong X\otimes\Hom_{\llin\Lambda}(\Omega\otimes D\Lambda,\Lambda) \cong
X\otimes\Hom_{\llin\Lambda}(D\Lambda,\Hom_{\llin\Lambda}(\Omega,\Lambda)). \]
We know from \intref{Proposition}{prop:standard-proj-inj} that $\Hom_{\llin\Lambda}(\Omega,\Lambda)$ is an injective left $\Lambda$-module, so the natural homomorphism
\[ X\otimes\Hom_{\llin\Lambda}(D\Lambda,\Hom_{\llin\Lambda}(\Omega,\Lambda)) \to
\Hom(\Hom(X,D\Lambda),\Hom_{\llin\Lambda}(\Omega,\Lambda)) \]
is an isomorphism. The latter is now isomorphic to
\[ \Hom(DX,\Hom_{\llin\Lambda}(\Omega,\Lambda)) \cong \Hom(\Omega\otimes DX,\Lambda). \]
As $\Omega\otimes DX$ is a finite dimensional projective left module, we conclude that $X\otimes\mho$ is a finite dimensional projective right module.

In general, as $\Lambda$ is finite dimensional, a general module $X$ is the directed colimit (union) of its finite dimensional submodules. Thus $X\otimes\mho$ is a directed colimit of projective modules, hence flat. Again, as $\Lambda$ is finite dimensional, it is perfect, and so all flat modules are projective.

We restrict once more to a finite dimensional module $X$. Then
\[ \Hom(X\otimes\mho,\Lambda) \cong \Hom(\Hom(\Omega\otimes DX,\Lambda),\Lambda) \cong \Omega\otimes DX. \]
Now, for an arbitrary module $Y$, the natural homomorphism
\[ Y\otimes\Hom(X\otimes\mho,\Lambda) \to \Hom(X\otimes\mho,Y\otimes\Lambda) \cong \Hom(X\otimes\mho,Y) \]
is an isomorphism, as $X\otimes\mho$ is a finite dimensional projective.

Chasing through the constructions reveals the explicit description of the morphism $Y\otimes\Omega\otimes DX\iso\Hom(X\otimes\mho,Y)$.
\end{proof}

\begin{rem}
Using standard isomorphisms we obtain the same result after exchanging $\Omega$ and $\mho$. For $X,Y\in\modcat\Lambda$ we have
\[ Y\otimes\mho\otimes DX \cong D\Hom(Y\otimes\mho,X) \cong D\Hom(X\otimes\Omega\otimes DY)
\cong \Hom(X\otimes\Omega,Y). \]
\end{rem}

\begin{lem}\label{lem:tau-new}
For all $X,Y\in\modcat\Lambda$ we have the commutative diagram
\[ \begin{tikzcd}
\Hom(Y\otimes\mho,X) \arrow[d,"{\Hom(1\otimes\tilde c,X)}"] &
X\otimes\Omega\otimes DY \arrow[l,"\sim" swap] \arrow[d,"i_X\otimes1"] \arrow[r,"\sim"]
& D\Hom(X\otimes\Omega,Y) \arrow[d,"{D\Hom(i_X,Y)}"]\\
\Hom_A(Y,X) & X\otimes_ADY \arrow[l,"\sim" swap] \arrow[r,"\sim"] & D\Hom_A(X,Y).
\end{tikzcd} \]
\end{lem}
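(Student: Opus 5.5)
The diagram splits into two squares, and I would verify each by a direct element chase on generators $x\otimes d\lambda\otimes\eta$ of $X\otimes\Omega\otimes DY$, with $x\in X$, $\lambda\in\Lambda$, $\eta\in DY$. Under $i_X\otimes1$ such an element goes to
\[ x\otimes_A\lambda\otimes\eta - x\lambda\otimes_A1\otimes\eta \in X\otimes_A\Lambda\otimes DY \cong X\otimes_A DY, \]
that is, to $x\otimes_A\lambda\eta - x\lambda\otimes_A\eta$. Here I use $\Lambda\otimes DY\cong DY$, so $1\otimes\eta\mapsto\eta$. The bottom maps are the semisimple isomorphisms of Section~\ref{sec:semisimple}:
\begin{itemize}
\item on the left, $x\otimes_A\eta\mapsto(y\mapsto x\,\lmod\eta(y))$;
\item on the right, the standard $X\otimes_ADY\cong D\Hom_A(X,Y)$, $x\otimes\eta\mapsto(g\mapsto\eta(g(x)))$.
\end{itemize}

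\textbf{Right square.} This is the easy one. The top isomorphism $X\otimes\Omega\otimes DY\to D\Hom(X\otimes\Omega,Y)$ is the standard map $x\otimes\omega\otimes\eta\mapsto(h\mapsto\eta(h(x\otimes\omega)))$. This is an isomorphism because $X\otimes\Omega$ is finitely generated projective by Proposition~\ref{prop:standard-proj-inj}. Starting from $x\otimes d\lambda\otimes\eta$, the path right-then-down gives
\[ g\mapsto\eta\big(g(i_X(x\otimes d\lambda))\big)=\eta\big(g(x\otimes_A\lambda)-g(x\lambda\otimes_A1)\big), \]
with $g\in\Hom(X\otimes_A\Lambda,Y)\cong\Hom_A(X,Y)$. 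The path down-then-right gives
\[ g\mapsto(\lambda\eta)(g(x))-\eta(g(x\lambda))=\eta(g(x)\lambda)-\eta(g(x\lambda)). \]
These agree once we identify $g\leftrightarrow g(-\otimes_A1)$, since $g(x\otimes_A\lambda)=g(x)\lambda$. So the right square is pure naturality of the standard isomorphism together with the identification $\Hom(X\otimes_A\Lambda,Y)=\Hom_A(X,Y)$.

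\textbf{Left square.} The top map is the isomorphism of the preceding lemma with the roles of $X$ and $Y$ swapped:
\[ x\otimes d\lambda\otimes\eta\mapsto\big(y\otimes\gamma\mapsto x\,\Phi_L(\gamma)(d\lambda\otimes[\eta,y])\big). \]
Precomposing with $1\otimes\tilde c$ and using $\Hom(Y\otimes_A\Lambda,X)=\Hom_A(Y,X)$, the left-then-down path yields
\[ y\mapsto x\,\Phi_L(c)(d\lambda\otimes[\eta,y]). \]
By the formula for $\Phi_L(c)$ this equals
\[ x\big(\lmod{[\eta,y]}(\lambda)-\lambda\,\lmod{[\eta,y]}(1)\big). \]
The down-then-left path yields
\[ y\mapsto x\,\lmod{(\lambda\eta)}(y)-x\lambda\,\lmod\eta(y). \]
The key identities to establish are therefore
\[ \lmod{[\eta,y]}(\mu)=\lmod{(\mu\eta)}(y) \quad\text{for all }\mu\in\Lambda, \]
and in particular $\lmod{[\eta,y]}(1)=\lmod\eta(y)$. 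Both sides are right $A$-linear in the relevant variable, so it suffices to check them after applying $\sigma$. Indeed,
\[ \sigma\big(a\,\lmod{[\eta,y]}(\mu)\big)=[\eta,y](a\mu)=\eta(ya\mu)=(\mu\eta)(ya)=\sigma\big(\lmod{(\mu\eta)}(ya)\big). \]
There is one subtlety here: whether $\lmod{}$ is left- or right-$A$-linear, and on which side $a$ must be inserted. I will use nondegeneracy of $\sigma(a\,\cdot)$ on $A$ to conclude. With $\mu=\lambda$ and $\mu=1$, the two paths then match term by term.

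The main obstacle is this left square. The bookkeeping of the various identifications ($\lmod{}$ versus $\rmod{}$, the side on which $A$ acts, and the implicit isomorphisms $X\otimes_A\Lambda\otimes DY\cong X\otimes_ADY$) makes sign and side errors likely. I would also need to double-check the chase giving the explicit top isomorphism from the previous lemma, since its proof only says ``chasing through the constructions''.
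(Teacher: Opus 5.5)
Your proposal is correct and follows the paper's proof: the same element chase on $x\otimes d\lambda\otimes\eta$, with the left square reduced via non-degeneracy of $\sigma$ to the identity $\lmods{[\eta,y]}(\mu)=\rmods{\mu\eta}(y)$, and the right square being naturality of $-\otimes DY\cong D\Hom(-,Y)$, which you verify explicitly where the paper calls it clear. The side issue you flag resolves as in the paper: since $Y$ is a right module, the map attached to $\mu\eta\in DY$ is the right $A$-linear $\rmods{\mu\eta}$, not the left-linear version you wrote, so $\sigma(\rmods{\mu\eta}(ya))=\sigma(\rmods{\mu\eta}(y)a)=\sigma(a\,\rmods{\mu\eta}(y))$ by the symmetry of $\sigma$.
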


\begin{proof}
Starting from $x\otimes d\lambda\otimes\eta$ in $X\otimes\Omega\otimes DY$, this is sent on the left to the morphism $Y\otimes\mho\to X$, $y\otimes\gamma\mapsto x\Phi_L(\gamma)(d\lambda\otimes[\eta,y])$, so precomposing with $1\otimes\tilde c$ yields the $A$-linear map
\[ y \mapsto x\Phi_L(c)(d\lambda\otimes[\eta,y]) = x\big(\lmods{[\eta,y]}(\lambda)-\lambda\,\lmods{[\eta,y]}(1)\big). \]
Next, for all $a\in A$ we have
\[ \sigma(a\,\lmods{[\eta,y]}(\lambda)) = [\eta,y](a\lambda) = \eta(ya\lambda) = (\lambda\eta)(ya) = \sigma(\rmods{\lambda\eta}(y)a). \]
Thus the non-degeneracy of $\sigma$ yields $\lmods{[\eta,y]}(\lambda)=\rmods{\lambda\eta}(y)$. We can therefore write the above $A$-linear map as $y\mapsto x\big(\rmods{\lambda\eta}(y)-\lambda\,\rmod\eta(y)\big)$.

On the other hand, $x\otimes d\lambda\otimes\eta$ is sent under $i_X\otimes1$ to $x\otimes_A\lambda\eta-x\lambda\otimes_A\eta$, which is then sent to the same $A$-linear map $y\mapsto x\cdot\rmods{\lambda\eta}(y)-x\lambda\cdot\rmod\eta(y)$.

The commutativity of the right hand side of the diagram is clear, using the natural isomorphism $-\otimes DY\cong D\Hom(-,Y)$. 
\end{proof}

\begin{prop}\label{prop:tau-new}
There is a unique isomorphism $\Hom(\Pi_1,X)\cong D\Ext^1(X,\Lambda)$, natural in $X$, and making the following square commute
\[ \begin{tikzcd}
\Hom(\Pi_1,X) \arrow[r,"\sim"] \arrow[d,"{\Hom(b,X)}"] & D\Ext^1(X,\Lambda) \arrow[d]\\
\Hom(\mho,X) \arrow[r,"\sim"] & D\Hom(X\otimes\Omega,\Lambda)
\end{tikzcd} \]
where the right hand vertical morphism is the dual of the pushout of $\mathbb P_X$.
\end{prop}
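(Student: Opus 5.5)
Both sides of the square are cokernels or kernels of maps that Lemma~\ref{lem:tau-new} identifies, so I plan to construct the top isomorphism by comparing two left exact sequences. Since $\Hom(-,X)$ is left exact, applying it to $\mathbb T$ gives the exact sequence
\[ 0 \to \Hom(\Pi_1,X) \xrightarrow{\Hom(b,X)} \Hom(\mho,X) \xrightarrow{\Hom(\tilde c,X)} \Hom(\Lambda\otimes_A\Lambda,X)\cong\Hom_A(\Lambda,X). \]
On the other side, I apply $\Hom(-,\Lambda)$ to $\mathbb P_X$. This gives the exact sequence $0\to\Hom(X,\Lambda)\to\Hom(X\otimes_A\Lambda,\Lambda)\xrightarrow{\Hom(i_X,\Lambda)}\Hom(X\otimes\Omega,\Lambda)\to\Ext^1(X,\Lambda)\to0$. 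Dualising, I get
\[ 0 \to D\Ext^1(X,\Lambda) \to D\Hom(X\otimes\Omega,\Lambda) \xrightarrow{D\Hom(i_X,\Lambda)} D\Hom_A(X,\Lambda). \]
The first map here is exactly the dual of the connecting map $\Hom(X\otimes\Omega,\Lambda)\to\Ext^1(X,\Lambda)$, i.e.\ of ``pushout of $\mathbb P_X$'' along a morphism $X\otimes\Omega\to\Lambda$. So this is the right-hand vertical map in the statement, and it is injective.

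Next I take $Y=\Lambda$ in Lemma~\ref{lem:tau-new}, with the roles of the variables arranged so that the lemma applies to $\Lambda\otimes\mho=\mho$ and $\Lambda\otimes_A D(\cdot)$. Concretely, with $Y=\Lambda$ and the module called $X$ there being our $X$, the lemma gives isomorphisms $\Hom(\mho,X)\cong X\otimes\Omega\otimes D\Lambda\cong D\Hom(X\otimes\Omega,\Lambda)$ and $\Hom_A(\Lambda,X)\cong X\otimes_A D\Lambda\cong D\Hom_A(X,\Lambda)$. Under these isomorphisms, $\Hom(1\otimes\tilde c,X)$ corresponds to $D\Hom(i_X,\Lambda)$. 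I should also check that the bottom row of the lemma is compatible with the identification $\Hom(\Lambda\otimes_A\Lambda,X)\cong\Hom_A(\Lambda,X)$. The composite horizontal isomorphisms are natural in $X$, since each constituent isomorphism in the lemma is natural.

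The two sequences above are kernels of corresponding maps under these natural isomorphisms, so the isomorphism $\Hom(\mho,X)\cong D\Hom(X\otimes\Omega,\Lambda)$ restricts to an isomorphism between the kernels. This gives $\Hom(\Pi_1,X)\cong D\Ext^1(X,\Lambda)$ making the square commute. Naturality in $X$ is inherited because all other maps in the diagram are natural and the vertical maps are monomorphisms. Uniqueness is immediate for the same reason: any isomorphism making the square commute is determined after composing with the injective right-hand vertical map.

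The main obstacle I expect is bookkeeping rather than substance. I must make sure that the identification $\Hom(\Lambda\otimes_A\Lambda,X)\cong\Hom_A(\Lambda,X)$ matches the bottom-left corner of Lemma~\ref{lem:tau-new} with the correct variance. I must also check that the dualised connecting map really is the ``dual of the pushout of $\mathbb P_X$'' with no stray sign; any sign could be absorbed into the definition of the isomorphism, but I would verify it directly.
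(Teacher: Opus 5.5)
Your proposal is correct and follows the paper's proof: specialise Lemma~\ref{lem:tau-new} to $Y=\Lambda$ and take kernels of the outer vertical maps. These kernels are $\Hom(\Pi_1,X)$, by right exactness of $\mathbb T$, and $D\Ext^1(X,\Lambda)$, by the long exact sequence for $\mathbb P_X$. Your remaining points (uniqueness and naturality follow because the right-hand vertical map is a monomorphism, and no sign issue can change the kernel) fill in details the paper leaves implicit.
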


\begin{proof}
We consider the diagram from the previous lemma in the special case $Y=\Lambda$, and then take kernels of the outer vertical maps.
\end{proof}

\subsection{The evaluation map}

We introduce the evaluation map for $X\in\modcat\Lambda$
\[ \Ev \colon D\End(X) \to k, \quad \psi \mapsto \psi(\id), \]
as well as the usual trace map for $Y\in\modcat A$
\[ \tr \colon \End_A(Y) \cong Y\otimes_ADY \to k, \quad y\otimes_A\eta \mapsto \eta(y). \]
We observe that, if $X$ is a $\Lambda$-module, then the trace map on $X\otimes_ADX$ factors through $X\otimes DX$. Using the isomorphism $X\otimes DX\cong D\End(X)$ sending $x\otimes\xi$ to the map $h\mapsto\xi(h(x))$, we see that $\Ev(x\otimes\xi)=\xi(x)=\tr(x\otimes\xi)$.

The next lemma collates some basic results concerning this evaluation map.

\begin{lem}\label{lem:ev-map}
Take $X,Y,P,Q\in\modcat\Lambda$ with $P,Q$ projective.
\begin{enumerate}
\item Let $f\colon P\to Q$ and $g\colon Q\to D\Hom(P,\Lambda)$. Then
\[ gf \in \Hom(P,D\Hom(P,\Lambda)) \cong D(P\otimes\Hom(P,\Lambda)) \cong D\End(P) \]
and
\[ D\Hom(f,\Lambda)g \in \Hom(Q,D\Hom(Q,\Lambda)) \cong D\End(Q), \]
and $\Ev(gf)=\Ev(D\Hom(f,\Lambda)g)$.
\item Let $g\in\Hom(X\otimes\Omega,Y)$ and $h\in\Hom(Y\otimes\mho,X)$. Then
\[ \tilde hg \colon X\otimes\Omega \to \Hom(\mho,X) \cong X\otimes\Omega\otimes D\Lambda
\cong D\Hom(X\otimes\Omega,\Lambda) \]
and
\[ \tilde gh \colon Y\otimes\mho \to \Hom(\Omega,Y) \cong Y\otimes\mho\otimes D\Lambda
\cong D\Hom(Y\otimes\mho,\Lambda), \]
and $\Ev(\tilde hg)=\Ev(\tilde gh)$.
\item Let $g,h$ be as above. Then, using the counit map $\varepsilon$ we can compute $\Ev(\tilde hg)$ via
\[ \Hom(X\otimes\Omega,\Hom(\mho,X)) \cong \Hom(\mho,X)\otimes\mho\otimes DX \xrightarrow{\varepsilon\otimes1} X\otimes DX \xrightarrow{\tr} k \]
and can compute $\Ev(\tilde gh)$ via
\[ \Hom(Y\otimes\mho,\Hom(\Omega,Y)) \cong \Hom(\Omega,Y)\otimes\Omega\otimes DY \xrightarrow{\varepsilon\otimes1} Y\otimes DY
\xrightarrow{\tr} k. \]
\item Let $f\in\Hom(X\otimes_A\Lambda,X)\cong\End_A(X)$. Then $\Ev(j_Xf)=\tr(f)$.
\end{enumerate}
\end{lem}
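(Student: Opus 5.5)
The plan is to reduce every part to a trace computation. The tool is the identification $P\otimes DP\cong D\End(P)$ for a finite dimensional module $P$, under which $\Ev(p\otimes\pi)=\pi(p)$. For part (4) we also use that the trace on $X\otimes_ADX$ factors through $X\otimes DX$. Each part then amounts to rewriting the relevant element as a sum of simple tensors and checking two trace expressions agree.

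For part (1), the plan runs as follows.
\begin{enumerate}
\item Use that $P$ is finitely generated projective, so $D\Hom(P,\Lambda)\cong P\otimes D\Lambda$.
\item Identify $\Hom(P,D\Hom(P,\Lambda))\cong D(P\otimes\Hom(P,\Lambda))\cong D\End(P)$. Here $\Ev$ is evaluation at $\id_P$, and $\id_P$ corresponds to $\sum_i p_i\otimes p_i^\ast$ for a dual basis $\{p_i,p_i^\ast\}$ of $P$.
\item Compute $\Ev(gf)=\sum_i g(f(p_i))(p_i^\ast)$.
\item Compute $\Ev(D\Hom(f,\Lambda)g)=\sum_j g(q_j)(q_j^\ast f)$ for a dual basis $\{q_j,q_j^\ast\}$ of $Q$.
\end{enumerate}
These two sums agree by the standard symmetry $\sum_i\langle g f p_i,p_i^\ast\rangle=\sum_j\langle g q_j,q_j^\ast f\rangle$. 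To see this, expand $f(p_i)=\sum_j q_j\,q_j^\ast(f(p_i))$ and $q_j^\ast f=\sum_i q_j^\ast(f(p_i))\,p_i^\ast$. Then use $\Lambda$-linearity of $g$ and the fact that the pairing $D\Hom(P,\Lambda)\times\Hom(P,\Lambda)\to k$ is balanced, i.e.\ $\psi(\lambda\alpha)=(\psi\lambda)(\alpha)$. This is exactly the cyclicity of trace $\tr(gf)=\tr(fg)$ for maps between $P$ and $Q$, transported through these identifications.

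For part (2), apply part (1) with $P=X\otimes\Omega$ and $Q=Y\otimes\mho$, both projective by Proposition~\ref{prop:standard-proj-inj} and the lemma on $X\otimes\mho$. Take $f=g$ in the first case and $f=h$ in the second.
\begin{itemize}
\item The adjunction gives $\tilde h\colon Y\to\Hom(\mho,X)$.
\item Via the isomorphisms $\Hom(\mho,X)\cong X\otimes\Omega\otimes D\Lambda\cong D\Hom(X\otimes\Omega,\Lambda)$ of the earlier lemma and remark, the element $\tilde h g$ is of the form in part (1).
\item Part (1) with $P=X\otimes\Omega$, $Q=Y$, $f=g$, $g=\tilde h$ is not quite available, since $Y$ need not be projective. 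So instead I will prove (2) directly by the same dual-basis/cyclicity argument.
\item Write $g$ and $h$ as tensors using $\Hom(X\otimes\Omega,Y)\cong Y\otimes\mho\otimes DX$ and $\Hom(Y\otimes\mho,X)\cong X\otimes\Omega\otimes DY$.
\item Show that both evaluations equal the same expression: the full contraction of $g\otimes h$, pairing $\Omega$ against $\mho$ via the defining pairing of $\mho=D(D\Lambda\otimes\Omega\otimes D\Lambda)$, pairing $DX$ against $X$, and pairing $DY$ against $Y$.
\end{itemize}
The main obstacle is this step. I must check that $\Phi_L$ and $\Phi_R$ are both compatible with the single trilinear form $\gamma(\theta\otimes d\lambda\otimes\phi)$, which is exactly the displayed identity $\phi(\Phi_R(\gamma)(\theta\otimes d\lambda))=\theta(\Phi_L(\gamma)(d\lambda\otimes\phi))$. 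I also need the identity $\lmods{[\eta,y]}(\lambda)=\rmods{\lambda\eta}(y)$ from the proof of Lemma~\ref{lem:tau-new}. With these, both sides reduce to the same symmetric contraction.

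Part (3) is then an unwinding of the first isomorphism: $\Hom(X\otimes\Omega,\Hom(\mho,X))\cong\Hom(\mho,X)\otimes\mho\otimes DX$. Evaluating at $\id$ contracts the $\mho$ factor against $\Hom(\mho,X)$, which is the counit $\varepsilon$, and then takes the trace on $X\otimes DX$. The second formula is symmetric. Part (4) follows by writing $f=\sum x\otimes_A\xi$ in $X\otimes_ADX\cong\End_A(X)$. We have $j_X=\Hom(p,X)$ and $j_X(x)(\lambda)=x\lambda$, so $j_Xf$ corresponds in $X\otimes_A\Lambda\otimes D\Lambda$-terms to $\sum x\otimes_A\xi$, pushed to $X\otimes DX$. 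Its evaluation is therefore $\sum\xi(x)=\tr(f)$, using the factorisation of $\tr$ through $X\otimes DX$ noted before the lemma.
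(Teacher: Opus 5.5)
Your arguments for parts (1) and (2) are essentially the paper's. Part (1) is the same dual-basis expansion $f(p_i)=\sum_j q_j\,q_j^\ast(f(p_i))$, $q_j^\ast f=\sum_i q_j^\ast(f(p_i))p_i^\ast$. For (2), the paper likewise bypasses (1), since the composite passes through the non-projective $Y$. It reduces to pure tensors $g\leftrightarrow y\otimes\gamma\otimes\xi$, $h\leftrightarrow x\otimes d\lambda\otimes\eta$, and shows $\Ev(\tilde hg)=\eta(g(x\otimes d\lambda))=\gamma([\xi,x]\otimes d\lambda\otimes[\eta,y])$; the other side follows by symmetry. Only the defining identity $\phi(\Phi_R(\gamma)(\theta\otimes d\lambda))=\gamma(\theta\otimes d\lambda\otimes\phi)=\theta(\Phi_L(\gamma)(d\lambda\otimes\phi))$ is used. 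The identity $\lmods{[\eta,y]}(\lambda)=\rmods{\lambda\eta}(y)$ is not needed.

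For (3), ``evaluating at $\id$ contracts $\mho$ against $\Hom(\mho,X)$'' is a slogan rather than an argument, but the check is immediate. $\tilde hg$ corresponds to $x\otimes d\lambda\otimes[\eta,y]\otimes\gamma\otimes\xi$, and $\varepsilon\otimes1$ followed by $\tr$ gives $\xi(x\Phi_L(\gamma)(d\lambda\otimes[\eta,y]))$. This is the value found in (2).

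Part (4), however, is asserted rather than proved. Here $\Ev$ is evaluation at $\id_{X\otimes_A\Lambda}$. The map $j_Xf$ is viewed in $\Hom(X\otimes_A\Lambda,D\Hom(X\otimes_A\Lambda,\Lambda))$ via $\Hom_A(\Lambda,X)\cong X\otimes_AD\Lambda\cong D\Hom_A(X,\Lambda)$, and this identification depends on the symmetrising element $\sigma$. So $j_Xf$ is an element of $D\End(X\otimes_A\Lambda)$, not of $X\otimes DX\cong D\End(X)$. The remark that $\Ev=\tr$ on $X\otimes DX$ therefore does not apply. Your claim that $j_Xf$ ``corresponds to $\sum x\otimes_A\xi$ pushed to $X\otimes DX$'' is exactly the content of (4).

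What is missing is the computation. Write $\bar f\in\End_A(X)$ for $f$, $\id_X=\sum_sx_s\otimes_A\xi_s$ and $\id_\Lambda=\sum_i\lambda_i\otimes_A\theta_i$. Then $x_s\otimes_A1$, $\rmod\xi_s$ is a dual basis of $X\otimes_A\Lambda$, and $j_Xf(x\otimes_A\mu)$ is the functional $\psi\mapsto\sum_i\theta_i(\psi(\bar f(x)\mu\lambda_i))$. Hence
\[ \Ev(j_Xf)=\sum_{i,s}\theta_i\big(\rmod\xi_s(\bar f(x_s)\lambda_i)\big)=\sum_{i,s}\sigma\big(\rmod\theta_i(1)\,\rmod\xi_s(\bar f(x_s)\lambda_i)\big). \]
To reach $\sum_s\xi_s(\bar f(x_s))=\tr(\bar f)$ you need three facts: $\sigma$ is symmetric (to move $\rmod\theta_i(1)$ to the right), $\rmod\xi_s$ is right $A$-linear (to absorb it into the argument), and $\sum_i\lambda_i\rmod\theta_i(1)=1$. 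None of these appears in your sketch, so (4) needs this argument supplied.
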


\begin{proof}
(1) Let us write $\id_P=\sum_ip_i\otimes\alpha_i$ in $P\otimes\Hom(P,\Lambda)$, and similarly $\id_Q=\sum_jq_j\otimes\beta_j$. Then $f(p_i)=\sum_jq_j\beta_j(f(p_i))$. Also, $\sum_i\beta_j(f(p_i))\alpha_i=\beta_jf$ as maps $P\to\Lambda$, since we can evaluate at some $p\in P$ to get
\[ \sum_i\beta_j(f(p_i))\alpha_i(p) = \sum_i\beta_j(f(p_i\alpha_i(p))) = \beta_j(f(p)). \]
Thus
\begin{multline*}
\Ev(gf) = \sum_i g(f(p_i))(\alpha_i) = \sum_{i,j} g(q_j)(\beta_j(f(p_i))\alpha_i)\\
= \sum_j g(q_j)(\beta_jf) = \sum_j D\Hom(f,\Lambda)(g(q_j))(\beta_j) = \Ev(D\Hom(f,\Lambda)g).
\end{multline*}

(2) It is enough to show the result when $g$ corresponds to a pure tensor $y\otimes\gamma\otimes\xi$ in $Y\otimes\mho\otimes DX$, and similarly $h$ corresponds to a pure tensor $x\otimes d\lambda\otimes\eta$ in $X\otimes\Omega\otimes DY$.

We note that $\tilde h\colon Y\to\Hom(\mho,X)\cong X\otimes\Omega\otimes D\Lambda$ is the map $y\mapsto x\otimes d\lambda\otimes[\eta,y]$, and the latter corresponds, as an element of $D\Hom(X\otimes\Omega,\Lambda)$, to $\psi\mapsto\eta(y\psi(x\otimes d\lambda))$. The composite $\tilde hg$ therefore sends $x'\otimes d\mu$ to the map $\psi\mapsto\eta(g(x'\otimes d\mu)\psi(x\otimes d\lambda))$.

Writing the identity on $X\otimes\Omega$ as $\sum_i x_i\otimes d\lambda_i\otimes\psi_i$ in $X\otimes\Omega\otimes\Hom(X\otimes\Omega,\Lambda)$, we have $x\otimes d\lambda=\sum_ix_i\otimes d\lambda_i\cdot\psi_i(x\otimes d\lambda)$, and hence
\[ \Ev(\tilde hg) = \sum_i \tilde hg(x_i\otimes d\lambda_i)(\psi_i) = \sum_i \eta(g(x_i\otimes d\lambda_i)\psi_i(x\otimes d\lambda))
= \eta(g(x\otimes d\lambda)). \]
If we now substitute in for $g=y\otimes\gamma\otimes\xi$ we get
\[ \Ev(\tilde hg) = \eta(y\Phi_R(\gamma)([\xi,x]\otimes d\lambda)) = \gamma([\xi,x]\otimes d\lambda\otimes[\eta,y]). \]
By symmetry the latter expression also equals $\Ev(\tilde gh)$.

(3) We keep the assumptions on $g$ and $h$ being pure tensors. As above, $\tilde h(y)=x\otimes d\lambda\otimes[\eta,y]$, so under the isomorphism
\[ \Hom(X\otimes\Omega,X\otimes\Omega\otimes D\Lambda) \cong X\otimes\Omega\otimes D\Lambda\otimes\mho\otimes DX, \]
we see that $\tilde hg$ corresponds to the pure tensor $x\otimes d\lambda\otimes[\eta,y]\otimes\gamma\otimes\xi$. Using the counit $\Hom(\mho,X)\otimes\mho\to X$, this is sent to $x\Phi_L(\gamma)(d\lambda\otimes[\eta,y])\otimes\xi$ in $X\otimes DX$, and so maps under the trace to
\[ \xi(x\Phi_L(\gamma)(d\lambda\otimes[\eta,y])) = \gamma([\xi,x]\otimes d\lambda\otimes[\eta,y]) = \Ev(\tilde hg). \]
The other statement is proved similarly.

(4) Let us write $\id_\Lambda\in\End_{\rlin A}(\Lambda)$ as $\sum_i\lambda_i\otimes_A\theta_i$ in $\Lambda\otimes_AD\Lambda$, and similarly $\id_X\in\End_A(X)$ as $\sum_sx_s\otimes_A\xi_s$ in $X\otimes_ADX$. For clarity we write $\bar f\in\End_A(X)$.

The isomorphisms $\Hom_A(\Lambda,X)\cong X\otimes_AD\Lambda\cong D\Hom_A(X,\Lambda)$ send $h\colon\Lambda\to X$ first to $\sum_ih(\lambda_i)\otimes_A\theta_i$, and then to the map $\psi\mapsto\sum_i\theta_i(\psi(h(\lambda_i)))$. In particular, putting $h=j_Xf(x\otimes_A\mu)$, we get $\psi\mapsto\sum_i\theta_i(\psi(\bar f(x)\mu\lambda_i))$. This allows us to consider $j_Xf$ as an element of $D(X\otimes_A\Lambda\otimes_A\Hom_A(X,\Lambda))$.

We also have the isomorphisms $\End(X\otimes_A\Lambda)\cong\Hom_A(X,X\otimes_A\Lambda)\cong X\otimes_A\Lambda\otimes_ADX$, under which the identity is sent to $\sum_sx_s\otimes_A1\otimes_A\xi_s$. Using $\sum_iy_i\rmod\theta_i(1)=1$ gives
\begin{multline*}
\Ev(j_Xf) = \sum_{i,s}\theta_i(\rmod\xi_s(\bar f(x_s)\lambda_i)) = \sum_{i,s}\sigma(\rmod\theta_i(1)\rmod\xi_s(\bar f(x_s)\lambda_i))\\
= \sum_{i,s}\xi_s(\bar f(x_s)\lambda_i\rmod\theta_i(1)) = \sum_s\xi_s(\bar f(x_s)) = \tr(\bar f)
\end{multline*}
as required.
\end{proof}

\subsection{Rewriting the Auslander--Reiten Formula}

We saw in \intref{Proposition}{prop:tau-new} that there is a unique natural isomorphism from $\tau X=D\Ext^1(X,\Lambda)$ to $\Hom(\Pi_1,X)$ which is compatible with the dual of pushout and the map $\Hom(b,X)$. By the uniqueness of left adjoints there is similarly a unique natural isomorphism from $\tau^-X$ to $X\otimes\Pi_1$. We also note that the analogous statements for left modules hold, so for example $\tau^-Y\cong\Pi_1\otimes Y$ for a left module $Y$.

We can now express the Auslander--Reiten Formula as bifunctorial pairings
\[ \{-,-\} \colon \Hom(Y,\Hom(\Pi_1,X))\times\Ext^1(X,Y) \to k \]
and
\[ \{-,-\}' \colon \Ext^1(X,Y)\times\Hom(Y\otimes\Pi_1,X) \to k, \]
and we wish to explicitly compute these using, respectively, the projective resolution $\mathbb P_X$ and the injective coresolution $\mathbb I_Y$.

\begin{prop}\label{prop:AR-one}
The Auslander--Reiten Formula can be expressed as the perfect pairing
\[ \{-,-\} \colon \Hom(Y,\Hom(\Pi_1,X))\times\Ext^1(X,Y) \to k, \quad \{f,\zeta\} \coloneqq \Ev(b^\ast fg), \]
where $\zeta=g\cdot\mathbb P_X$ for some $g\colon X\otimes\Omega\to Y$, and $b^\ast f\colon Y\to\Hom(\mho,X)$.
\end{prop}

\begin{proof}
Writing $\alpha\colon\Hom(\Pi_1,X)\iso D\Ext^1(X,\Lambda)$ for the isomorphism from \intref{Proposition}{prop:tau-new}, the pairing $\{\alpha f,\zeta\}$ is computed by composing $\alpha f$ with the dual of the pushout to obtain $\hat f\colon Y\to D\Hom(X\otimes\Omega,\Lambda)$, and then taking $\Ev(\hat fg)$. Now, $\hat f$ equals $b^\ast f$ followed by the natural isomorphism, so $\Ev(\hat fg)=\Ev(b^\ast fg)$ by \intref{Lemma}{lem:ev-map}.
\end{proof}

\begin{lem}\label{lem:proj-inj-comparison}
For all $g\colon X\otimes\Omega\to Y$ we have $g\cdot\mathbb P_X=\mathbb I_Y\cdot\tilde g$.
\end{lem}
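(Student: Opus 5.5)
Here $\tilde g\colon X\to\Hom(\Omega,Y)$ is the adjoint of $g$, namely $\tilde g(x)(d\lambda)=g(x\otimes d\lambda)$. The plan is to exhibit an explicit morphism of short exact sequences from $\mathbb P_X$ to $\mathbb I_Y$ whose outer components are $g$ and $\tilde g$. The standard fact that then applies is this: if a morphism of short exact sequences $0\to A\to B\to C\to0$ and $0\to A'\to B'\to C'\to0$ has outer components $\alpha\colon A\to A'$ and $\gamma\colon C\to C'$, then $\alpha\cdot\eta=\eta'\cdot\gamma$ in $\Ext^1(C,A')$, where $\eta$ and $\eta'$ are the two extension classes. (The pushout of $\eta$ along $\alpha$ and the pullback of $\eta'$ along $\gamma$ both receive a map from the middle, and both are equivalent to the extension through $B\to B'$.) Applied here, this gives $g\cdot\mathbb P_X=\mathbb I_Y\cdot\tilde g$ at once.

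The middle map will be $m\colon X\otimes_A\Lambda\to\Hom_A(\Lambda,Y)$. The key step is to choose it with the correct signs. The most natural guess is $m(x\otimes_A\lambda)(\mu)=g(x\otimes d(\lambda\mu))-g(x\otimes d\lambda)\mu$, or some sign variant of it. I will check the following.
\begin{itemize}
\item[(a)] $m$ is well defined over $A$ and right $\Lambda$-linear. This should follow from $d(a)=0$ for $a\in A$ and the Leibniz rule.
\item[(b)] The left square commutes, i.e.\ $m\,i_X=j_Y g$. Using $i_X(x\otimes d\lambda)=x\otimes_A\lambda-x\lambda\otimes_A1$ and the Leibniz rule to expand $d(\lambda\mu)$, this should reduce to $g(x\otimes d\lambda)\mu$, which is exactly $j_Y(g(x\otimes d\lambda))(\mu)$.
\item[(c)] The right square commutes, i.e.\ $q_Y m=\tilde g\,p_X$. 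Evaluating at $d\mu$ gives $m(x\otimes\lambda)(\mu)-m(x\otimes\lambda)(1)\mu$, and this should equal $g(x\lambda\otimes d\mu)=\tilde g(x\lambda)(d\mu)$, using $d(1)=0$.
\end{itemize}

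The main obstacle is getting these signs consistent with the paper's conventions. These are the Leibniz rule in the form $d(\lambda\mu)=\lambda d\mu-(d\lambda)\mu$, which matches $i(d\mu)=1\otimes\mu-\mu\otimes1$, together with the chosen signs $j_X=\Hom(p,X)$ and $q_X=-\Hom(i,X)$. The remark preceding the lemma indicates that the sign in $q$ is exactly what makes some such $m$ exist. If the guessed formula for $m$ fails one of the squares by a sign, I will adjust it, for instance to $m(x\otimes\lambda)(\mu)=g(x\lambda\otimes d\mu)$ or to an expression involving $x\otimes\lambda\, d\mu$, and recheck (b) and (c).

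Conceptually, $m$ should be induced from the bimodule level. A candidate is a map $\Lambda\otimes_A\Lambda\to\Hom_A(\Lambda,\Omega)$ built from $d$, compatible with $\mathbb P$ and with $\Hom(\mathbb P,\Omega)$. Tensoring with $X$ and composing with $g$ would then yield $m$, and this gives a cleaner way to organise the verification. Once both squares commute, the standard fact above completes the proof.
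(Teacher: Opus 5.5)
Your framework is the same as the paper's: build a morphism of short exact sequences $\mathbb P_X\to\mathbb I_Y$ with outer components $g$ and $\tilde g$, then use the standard pushout/pullback fact. But the whole content of the lemma is the middle map, and there is a genuine gap there. The map you propose does not work, and your fallbacks turn out to be the same map.

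The root cause is the Leibniz rule. From $i(d\mu)=1\otimes_A\mu-\mu\otimes_A1$ one computes $i(d(\lambda\mu))=\lambda\,i(d\mu)+i(d\lambda)\,\mu$, so $d(\lambda\mu)=\lambda\,d\mu+(d\lambda)\mu$. The minus-sign version you quote would give $d\lambda=-d\lambda$ on putting $\mu=1$. With the correct rule your candidate becomes
\[ m(x\otimes_A\lambda)(\mu)=g(x\otimes d(\lambda\mu))-g(x\otimes d\lambda)\mu=g(x\lambda\otimes d\mu), \]
which is exactly your proposed adjustment.

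This map depends only on $x\lambda$, so $m\,i_X=0$. On the other hand, $j_Yg(x\otimes d\lambda)$ evaluated at $1$ is $g(x\otimes d\lambda)$, which is nonzero in general. So (b) fails. The map is also only $A$-linear, not $\Lambda$-linear:
\[ m(x\otimes_A\lambda)(\nu\mu)-m(x\otimes_A\lambda\nu)(\mu)=g(x\lambda\otimes d\nu)\mu. \]
Only (c) holds, and no sign change repairs (b). The defect is the extra term $-g(x\otimes d\lambda)\mu$, which is the $q_Y$-correction applied too early. Your formula is precisely $q_Y(m(x\otimes_A\lambda))(d\mu)$ for the correct $m$.

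The correct choice, which the paper uses, is $m(x\otimes_A\lambda)(\mu)=g(x\otimes d(\lambda\mu))$. You can derive it rather than guess:
\begin{enumerate}
\item Right $\Lambda$-linearity forces $m(x\otimes_A\lambda)(\mu)=m(x\otimes_A1)(\lambda\mu)$.
\item The right square forces $m(x\otimes_A1)(\mu)=g(x\otimes d\mu)+m(x\otimes_A1)(1)\mu$.
\item Taking $m(x\otimes_A1)(1)=0$ gives the formula. Any other valid middle map differs from it by $j_Ykp_X$ for some $k\in\Hom(X,Y)$.
\end{enumerate}
The verification is then direct:
\begin{enumerate}
\item $\Lambda$-linearity is immediate from the formula.
\item Well-definedness over $A$ and $A$-linearity in $\mu$ follow from $d(a)=0$ and the Leibniz rule.
\item The left square reads $g(x\otimes d(\lambda\mu))-g(x\lambda\otimes d\mu)=g(x\otimes d\lambda)\mu$.
\item The right square reads $g(x\otimes d(\lambda\mu))-g(x\otimes d\lambda)\mu=g(x\lambda\otimes d\mu)=\tilde g(x\lambda)(d\mu)$.
\end{enumerate}
Both squares hold by the correct Leibniz rule.
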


\begin{proof}
This follows from the following exact commutative diagram
\[ \begin{tikzcd}
\mathbb P_X\ \colon &[-20pt] 0 \arrow[r] & X\otimes\Omega \arrow[r,"i_X"] \arrow[d,"g"]
& X\otimes_A\Lambda \arrow[r,"p_X"] \arrow[d,"h"] & X \arrow[r] \arrow[d,"\tilde g"] & 0\\
\mathbb I_Y\ \colon &[-20pt] 0 \arrow[r] & Y \arrow[r,"j_Y"] & \Hom_A(\Lambda,Y) \arrow[r,"q_Y"] &
\Hom(\Omega,Y) \arrow[r] & 0
\end{tikzcd} \]
where
\[ h(x\otimes_A\lambda) \colon \mu \mapsto g(x\otimes d(\lambda\mu)). \]
We remark that this result uses our sign conventions $j_Y=\Hom(p,Y)$ and $q_Y=-\Hom(i,Y)$.
\end{proof}

\begin{prop}\label{prop:AR-two}
For $\zeta\in\Ext^1(X,Y)$ and $f\colon\tau^-Y\to X$ we have
\[ \{-,-\}' \colon \Ext^1(X,Y)\times\Hom(Y\otimes\Pi_1,X) \to k, \quad \{\zeta,f\}' \coloneqq \Ev(\tilde gfb_\ast), \]
where $\zeta=\mathbb I_Y\cdot\tilde g$ for some $\tilde g\colon X\to\Hom(\Omega,Y)$, and $fb_\ast\colon Y\otimes\mho\to X$.
\end{prop}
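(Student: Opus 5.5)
Unwinding the definition, $\{\zeta,f\}'=\{\tilde f,\zeta\}$, where $\tilde f\colon Y\to\Hom(\Pi_1,X)$ corresponds to $f\colon Y\otimes\Pi_1\to X$ under the tensor--hom adjunction. This is the realisation of the adjunction $(\tau^-,\tau)$ given by the natural isomorphisms $\tau^-Y\cong Y\otimes\Pi_1$ and $\tau X\cong\Hom(\Pi_1,X)$. I would first check that this realisation is legitimate: the isomorphism $\tau^-\cong-\otimes\Pi_1$ was chosen precisely as the one induced by uniqueness of left adjoints, so the adjunction transports to tensor--hom. The pairing $\{-,-\}$ is then the one described in \intref{Proposition}{prop:AR-one}.

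Write $\zeta=g\cdot\mathbb P_X$ with $g\colon X\otimes\Omega\to Y$. By \intref{Lemma}{lem:proj-inj-comparison} we then have $\zeta=\mathbb I_Y\cdot\tilde g$, where $\tilde g\colon X\to\Hom(\Omega,Y)$ is the adjoint of $g$. Every $\zeta$ arises this way: $\mathbb P_X$ is a projective resolution, so every extension is a pushout of it. Moreover, any two choices of $\tilde g$ differ by something factoring through $q_Y$, so $\mathbb I_Y\cdot\tilde g$ determines the class. By \intref{Proposition}{prop:AR-one},
\[ \{\zeta,f\}' = \{\tilde f,\zeta\} = \Ev(b^\ast\tilde f\,g). \]
Set $h\coloneqq fb_\ast\colon Y\otimes\mho\to X$. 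Its adjoint is $\tilde h=b^\ast\tilde f\colon Y\to\Hom(\mho,X)$, since adjunction commutes with precomposition by $b$. Hence
\[ \{\zeta,f\}' = \Ev(\tilde h g), \]
and \intref{Lemma}{lem:ev-map}(2) gives $\Ev(\tilde hg)=\Ev(\tilde g h)=\Ev(\tilde g f b_\ast)$, which is the claimed formula.

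It remains to check that the right-hand side depends only on $\zeta$ and not on the choice of $\tilde g$. This follows because the left-hand side is already well defined. Concretely, each $g$ with $g\cdot\mathbb P_X=\zeta$ produces a $\tilde g$ realising $\zeta$ and yields the same value. A different $\tilde g'$ with $\mathbb I_Y\cdot\tilde g'=\zeta$ still has the form $\tilde g'=\widetilde{g'}$ for some $g'$, namely its adjoint, and $\widetilde{g'}$ realises $g'\cdot\mathbb P_X$. Applying \intref{Lemma}{lem:proj-inj-comparison} again gives $g'\cdot\mathbb P_X=\zeta$, so $\tilde g'$ yields the same value as well.

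The main obstacle is the bookkeeping of identifications. The two sides of \intref{Lemma}{lem:ev-map}(2) live in $D\Hom(X\otimes\Omega,\Lambda)$ and $D\Hom(Y\otimes\mho,\Lambda)$, and the argument needs $\Ev(b^\ast\tilde f g)$ computed via \intref{Proposition}{prop:AR-one} to agree with $\Ev(\tilde hg)$ in exactly the sense used in that lemma. This requires checking that the isomorphism $\Hom(\mho,X)\cong X\otimes\Omega\otimes D\Lambda$ used in the lemma is the same one used in \intref{Proposition}{prop:AR-one}. Both come from the earlier lemma with $Y=\Lambda$, so I expect this to be routine.
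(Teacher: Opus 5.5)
Your proposal is correct and is essentially the paper's own proof: set $h=fb_\ast$ so that $\tilde h=b^\ast\tilde f$, pass from $\mathbb I_Y\cdot\tilde g$ to $g\cdot\mathbb P_X$ via \intref{Lemma}{lem:proj-inj-comparison}, and then apply \intref{Proposition}{prop:AR-one} and \intref{Lemma}{lem:ev-map}(2) to get $\Ev(\tilde hg)=\Ev(\tilde gh)$. Your extra check that the value does not depend on the choice of $\tilde g$ is sound; the paper leaves it implicit.
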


\begin{proof}
We set $h\coloneqq fb_\ast$, so that $\tilde h=b^\ast\tilde f$ as maps $Y\to\Hom(\mho,X)$. Writing $g\colon X\otimes\Omega\to Y$, the previous lemma tells us that $\zeta=g\cdot\mathbb P_X$, so we can use \intref{Proposition}{prop:AR-one} and \intref{Lemma}{lem:ev-map} (2) to get
\[ \{\zeta,f\}' = \{\tilde f,\zeta\} = \Ev(\tilde hg) = \Ev(\tilde gh) = \Ev(\tilde gfb_\ast). \qedhere \]
\end{proof}

\section{Invariance under the translate}

From now on we redefine $\tau X\coloneqq\Hom(\Pi_1,X)$ and $\tau^-X\coloneqq X\otimes\Pi_1$ and use the reformulations of the Auslander--Reiten Formula given in \intref{Propositions}{prop:AR-one} and \intref{}{prop:AR-two}.

In order to show the $\tau$-invariance of the Auslander--Reiten Formula we need a new way of computing the pairing when it involves some $\zeta\in\Ext^1(\tau^-X,Y)$. We first show how to compute it in terms of the projective presentation
\[ \begin{tikzcd}
\mathbb T_X \ \colon &[-20pt]
X\otimes_A\Lambda \arrow[r,"1\otimes\widetilde c"] & X\otimes\mho \arrow[r,"1\otimes b"] & X\otimes\Pi_1 \arrow[r] & 0
\end{tikzcd} \]

\begin{lem}\label{lem:T_X-pairing}
There is a well-defined pairing
\[ \Hom(Y,X) \times \Ext^1(\tau^-X,Y) \to k, \quad (f,\zeta) \mapsto \Ev(j_Xfg) = \tr(fg), \]
where $\zeta=g\cdot\mathbb T_X$.
\end{lem}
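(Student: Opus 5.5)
The plan is to make the meaning of $\zeta=g\cdot\mathbb T_X$ precise, check the stated equality $\Ev(j_Xfg)=\tr(fg)$, and then show that the value does not depend on the choice of $g$.

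\textbf{Setting up the representatives.} Write $K\subseteq X\otimes\mho$ for the image of $1\otimes\tilde c$. This is the kernel of $1\otimes b$, since $\mathbb T_X$ is right exact. By the first lemma on $\mho$ the module $X\otimes\mho$ is projective, so every $\zeta\in\Ext^1(X\otimes\Pi_1,Y)$ is the pushout of $0\to K\to X\otimes\mho\to X\otimes\Pi_1\to0$ along some $\bar g\colon K\to Y$. Composing with the surjection $X\otimes_A\Lambda\to K$ induced by $1\otimes\tilde c$ gives $g\colon X\otimes_A\Lambda\to Y$, and this is what $\zeta=g\cdot\mathbb T_X$ means. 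The admissible $g$ are exactly those vanishing on $\ker(1\otimes\tilde c)$.

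\textbf{The equality of the two expressions.} Under $\Hom(X\otimes_A\Lambda,X)\cong\End_A(X)$ the composite $fg$ is an $A$-endomorphism of $X$. Lemma \ref{lem:ev-map}(4) then gives $\Ev(j_Xfg)=\tr(fg)$ directly. Both expressions are bilinear in $(f,g)$.

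\textbf{Well-definedness.} Two representatives $g,g'$ of the same $\zeta$ correspond to maps $K\to Y$ that differ by a map extending to $X\otimes\mho$. This is the standard description of $\Ext^1$ via a projective presentation, since $X\otimes\mho$ is projective. Hence $g-g'=u\,(1\otimes\tilde c)$ for some $u\colon X\otimes\mho\to Y$. So it suffices to prove
\[
\tr\big(h\,(1\otimes\tilde c)\big)=0 \quad\text{for every } h\in\Hom(X\otimes\mho,X),
\]
applied to $h=fu$.

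This is where Lemma \ref{lem:tau-new} enters, with $Y=X$. Write $h$ as the image of some $\omega\in X\otimes\Omega\otimes DX$. Then $h(1\otimes\tilde c)\in\Hom_A(X,X)$ corresponds to $(i_X\otimes1)(\omega)\in X\otimes_ADX$. In turn this corresponds to $D\Hom(i_X,X)(\psi)\in D\Hom_A(X,X)$, where $\psi\in D\Hom(X\otimes\Omega,X)$ is the image of $\omega$.

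The trace on $X\otimes_ADX$, $x\otimes\eta\mapsto\eta(x)$, is evaluation at $\id_X$ under $X\otimes_ADX\cong D\Hom_A(X,X)$. So the trace in question equals $\psi(\id_X\circ i_X)$, where $\id_X$ is viewed in $\Hom(X\otimes_A\Lambda,X)$, i.e.\ as $p_X$. Since $p_Xi_X=0$ by exactness of $\mathbb P_X$, the trace vanishes.

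\textbf{Expected obstacle.} The main care needed is the identification of the bottom row of Lemma \ref{lem:tau-new}. One must check that $\Hom_A(X,X)\cong\Hom(X\otimes_A\Lambda,X)$ is compatible with $D\Hom(i_X,X)$, so that the identity corresponds to $p_X$. One must also check that the isomorphism $X\otimes_ADX\cong D\Hom_A(X,X)$ pairs $x\otimes\eta$ with $\phi$ by $\eta(\phi(x))$, so that the trace really is evaluation at $\id_X$. The sign conventions for $i_X$ are irrelevant here, since we only use $p_Xi_X=0$.
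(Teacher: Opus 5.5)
Your proof is correct, but it proves the key vanishing $\tr\big(h(1\otimes\tilde c)\big)=0$ by a different mechanism from the paper's.

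\textbf{The paper's route.} The paper stays with the functional $\Ev(j_X\,\cdot\,)$.
\begin{enumerate}
\item It rewrites $\Ev(j_Xfh(1\otimes\tilde c))$ as $\Ev\big(D\Hom(1\otimes\tilde c,\Lambda)\,j_Xfh\big)$ using \intref{Lemma}{lem:ev-map}~(1).
\item It identifies $D\Hom(1\otimes\tilde c,\Lambda)$ with $\Hom(i,X)=-q_X$. This uses a transposed form of \intref{Lemma}{lem:tau-new}, under $D\Hom(X\otimes\mho,\Lambda)\cong\Hom(\Omega,X)$.
\item It concludes from $q_Xj_X=0$, i.e.\ exactness of $\mathbb I_X$.
\end{enumerate}

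\textbf{Your route.} You work directly with the trace.
\begin{enumerate}
\item You apply \intref{Lemma}{lem:tau-new} literally with $Y=X$.
\item You read $\tr$ on $X\otimes_ADX$ as evaluation at $\id_X$ under $X\otimes_ADX\cong D\Hom_A(X,X)$.
\item You observe that $\id_X$ corresponds to $p_X$ under $\Hom_A(X,X)\cong\Hom(X\otimes_A\Lambda,X)$.
\item You conclude from $p_Xi_X=0$, i.e.\ exactness of $\mathbb P_X$.
\end{enumerate}

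The identifications you flag as the main obstacle are fine. The right square of \intref{Lemma}{lem:tau-new} is naturality of $-\otimes DY\cong D\Hom(-,Y)$. The pairing there is $x\otimes\eta\mapsto(\phi\mapsto\eta(\phi(x)))$, so evaluating at $\id_X$ is exactly the trace.

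\textbf{What each approach buys.} Your argument uses \intref{Lemma}{lem:tau-new} exactly as stated. It needs neither \intref{Lemma}{lem:ev-map}~(1) nor any sign bookkeeping, since only $p_Xi_X=0$ enters. The paper's formulation is chosen so it transfers verbatim to the presentation $\mathbb T'_X$ in the next lemma. There the relevant functional is $\Ev(j'\,\cdot\,)$, which is not the trace of an $A$-endomorphism. The cancellation $(i_X\otimes1)j'_1=(1\otimes_A\tilde c\otimes1)j'_2$ then only becomes visible after transposing via \intref{Lemma}{lem:ev-map}~(1).
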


\begin{proof}
We begin by remarking that if $\zeta=g\cdot\mathbb T_X$, then $g$ necessarily factors through the image of $1\otimes\tilde c\colon X\otimes_A\Lambda\to X\otimes\mho$. Also, $\Ev(j_Xfg)=\tr(fg)$ by \intref{Lemma}{lem:ev-map} (4).

To see that the pairing is well-defined we need to show that if $\zeta$ is split, equivalently if we have a factorisation $g=h(1\otimes\tilde c)$, then $\Ev(j_Xfg)=0$. Identifying $\Hom_A(\Lambda,X)\cong D\Hom(X\otimes_A\Lambda,\Lambda)$ we have
\[ \Ev(j_Xfg) = \Ev(j_Xfh(1\otimes\tilde c)) = \Ev(D\Hom(1\otimes\tilde c,\Lambda)j_Xfh). \]
By \intref{Lemma}{lem:tau-new} we know that $D\Hom(\id\otimes\tilde c,\Lambda)=\Hom(i,X)=-q_X$, so $\Ev(j_Xfg)=-\Ev(q_Xj_Xfh)=0$ as required.
\end{proof}

\begin{lem}
Take $\zeta\in\Ext^1(\tau^-X,Y)$ and write this as $g\cdot\mathbb T_X=\zeta=\mathbb I_Y\cdot\tilde h$ for some maps $g\colon X\otimes_A\Lambda\to Y$ and $\tilde h\colon\tau^-X\to\Hom(\Omega,Y)$. Then, under the natural maps
\[ \Hom(X\otimes_A\Lambda,Y) \cong Y\otimes_ADX \to Y\otimes DX \]
and
\[ \Hom(X\otimes\mho,\Hom(\Omega,Y)) \cong \Hom(\Omega,Y)\otimes\Omega\otimes DX \xrightarrow{\varepsilon\otimes1} Y\otimes DX \]
the images of $g$ and $\tilde hb_\ast$ differ by a sign.
\end{lem}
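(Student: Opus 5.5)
The plan is to compare $g$ and $\tilde hb_\ast$ through a single morphism of complexes from $\mathbb T_X$ to $\mathbb I_Y$, and then push everything into $Y\otimes DX$ using the isomorphism $\Hom(X\otimes\mho,Z)\cong Z\otimes\Omega\otimes DX$ for a suitable $Z$.

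\emph{Step 1: a morphism of complexes.} Since $\zeta=g\cdot\mathbb T_X=\mathbb I_Y\cdot\tilde h$, I expect a morphism of complexes from $\mathbb T_X$ to $\mathbb I_Y$. Its components are $g\colon X\otimes_A\Lambda\to Y$, some $k\colon X\otimes\mho\to\Hom_A(\Lambda,Y)$, and $\tilde h\colon\tau^-X\to\Hom(\Omega,Y)$, satisfying
\[ k(1\otimes\tilde c)=j_Yg \quad\textrm{and}\quad q_Yk=\tilde h(1\otimes b)=\tilde hb_\ast. \]
This comes from the standard comparison argument. Write $g=g'(1\otimes\tilde c)$ with $g'$ defined on the image $K$ of $1\otimes\tilde c$, which is possible as remarked in the proof of \intref{Lemma}{lem:T_X-pairing}. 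Then $\zeta$ is the pushout along $g'$ of $0\to K\to X\otimes\mho\to\tau^-X\to0$. Because $\Hom_A(\Lambda,Y)$ is injective, the map $j_Yg'$ extends over $X\otimes\mho$, and the induced map on cokernels is a valid choice of $\tilde h$. The freedom in the choice of $\tilde h$ is harmless: changing $\tilde h$ within its class changes $\tilde hb_\ast$ by a map factoring through $q_Y$, which can be absorbed into $k$. So it is enough to prove the statement for $\tilde hb_\ast=q_Yk$.

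\emph{Step 2: naturality in $Z$.} Set $Z=\Hom_A(\Lambda,Y)$. I will use the isomorphism $\Hom(X\otimes\mho,Z)\cong Z\otimes\Omega\otimes DX$, natural in $Z$, together with the analogue of \intref{Lemma}{lem:tau-new} with the roles of the variables arranged so that precomposition with $1\otimes\tilde c$ corresponds to
\[ i_Z\otimes1\colon Z\otimes\Omega\otimes DX\to Z\otimes_ADX\cong\Hom(X\otimes_A\Lambda,Z). \]
This analogue is the same calculation as in \intref{Lemma}{lem:tau-new}. The earlier lemma allowed arbitrary $Y$, so $Z$ need not be finite dimensional.

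Let $\kappa\in Z\otimes\Omega\otimes DX$ correspond to $k$. Naturality in $Z$ gives two facts:
\[ q_Yk\ \textrm{corresponds to}\ (q_Y\otimes1)\kappa\in\Hom(\Omega,Y)\otimes\Omega\otimes DX, \]
\[ (i_Z\otimes1)\kappa=(j_Y\otimes1)\bar g, \]
where $\bar g\in Y\otimes_ADX$ is the element corresponding to $g$.

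\emph{Step 3: the sign.} Let $\mathrm{ev}_1\colon Z\to Y$ be $h\mapsto h(1)$; it is right $A$-linear and satisfies $\mathrm{ev}_1j_Y=\id$. I will check on pure tensors $h\otimes d\lambda\otimes\xi$ that
\[ (\mathrm{ev}_1\otimes1)(i_Z\otimes1) = -(\varepsilon\otimes1)(q_Y\otimes1) \]
as maps $Z\otimes\Omega\otimes DX\to Y\otimes DX$. On the left-hand side, $i_Z\otimes1$ gives $h\otimes_A\lambda\xi-h\lambda\otimes_A\xi$, and applying $\mathrm{ev}_1$ followed by the projection to $Y\otimes DX$ gives $h(1)\lambda\otimes\xi-h(\lambda)\otimes\xi$. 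On the right-hand side, $\varepsilon(q_Y(h)\otimes d\lambda)=q_Y(h)(d\lambda)=h(\lambda)-h(1)\lambda$, which is the negative.

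Applying this to $\kappa$, the image of $\tilde hb_\ast$ equals $-(\mathrm{ev}_1\otimes1)(j_Y\otimes1)\bar g$. This is minus the image of $\bar g$ in $Y\otimes DX$, which is the claim.

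\emph{Main obstacle.} I expect the hardest part to be the careful bookkeeping in Step 2. One has to confirm that the isomorphism $\Hom(X\otimes\mho,Z)\cong Z\otimes\Omega\otimes DX$ is natural in $Z$ and compatible with the ``counit'' description used in the statement. One also has to check that the transposed version of \intref{Lemma}{lem:tau-new} holds with exactly the sign $i_Z\otimes1$, not its negative. I would verify this directly on pure tensors using $\Phi_L(c)(d\lambda\otimes\phi)=\lmod\phi(\lambda)-\lambda\,\lmod\phi(1)$ and the identity $\lmods{[\eta,y]}(\lambda)=\rmods{\lambda\eta}(y)$ from that proof.
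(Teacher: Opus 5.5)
Your proposal is correct and is essentially the paper's proof. The paper also takes the middle component $t\colon X\otimes\mho\to\Hom_A(\Lambda,Y)$ of a morphism $\mathbb T_X\to\mathbb I_Y$ and writes it as $\sum_i\alpha_i\otimes d\lambda_i\otimes\xi_i$. It computes $t(x\otimes c)$ exactly as in \intref{Lemma}{lem:tau-new}, evaluates at $1$ (your $\mathrm{ev}_1$, using right $A$-linearity of the $\alpha_i$) to recover $g$, and compares this with $(\varepsilon\otimes1)(q_Yt)$ to get the sign. Your Step~1 justification that such a $t$ exists for the given $\tilde h$ is slightly more careful than the paper's ``by assumption'', but the route is the same.
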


\begin{proof}
By assumption there is an exact commutative diagram of the form
\[ \begin{tikzcd}
& X\otimes_A\Lambda \arrow[r,"1\otimes\tilde c"] \arrow[d,"g"] & X\otimes\mho \arrow[r,"1\otimes b"] \arrow[d,"t"] &
\tau^-X \arrow[r] \arrow[d,"\tilde h"] & 0\\
0 \arrow[r] & Y \arrow[r,"j_Y"] & \Hom_A(\Lambda,Y) \arrow[r,"q_Y"] & \Hom(\Omega,Y) \arrow[r] & 0
\end{tikzcd} \]
Using the isomorphism
\[ \Hom(X\otimes\mho,\Hom_A(\Lambda,Y)) \cong \Hom_A(\Lambda,Y)\otimes\Omega\otimes DX \]
we can express $t$ as
\[ t = \sum_i \alpha_i\otimes d\lambda_i\otimes\xi_i, \quad
x\otimes\gamma \mapsto \sum_i \alpha_i\cdot\Phi_L(\gamma)(d\lambda_i\otimes[\xi_i,x]). \]
In particular, as in the proof of \intref{Lemma}{lem:tau-new}, we have
\[ t(x\otimes c) = \sum_i \alpha_i\big(\rmods{\lambda_i\xi_i}(x)-\lambda\rmods{\xi_i}(x)\big) \in \Hom_A(\Lambda,Y). \]
Using that the $\alpha_i$ are right $A$-linear we obtain the equality in $Y$
\[ g(x\otimes_A1) = j_Yg(x\otimes_A1)(1) = t(x\otimes c)(1) = \sum_i\big(\alpha_i(1)\rmods{\lambda_i\xi_i}(x) - \alpha_i(\lambda_i)\rmods{\xi_i}(x)\big). \]
In other words, as an element of $Y\otimes_ADX$ we have
\[ g = \sum_i \big(\alpha_i(1)\otimes_A\lambda_i\xi_i - \alpha_i(\lambda_i)\otimes_A\xi_i\big), \]
which of course maps to $\sum_i\big(\alpha_i(1)\lambda_i-\alpha_i(\lambda_i)\big)\otimes\xi_i$ in $Y\otimes DX$.

On the other hand,
\[ q_Yt = \sum_i q_Y(\alpha_i)\otimes d\lambda_i\otimes\xi_i, \]
so using the counit we have
\[ (\varepsilon\otimes\id)(q_Yt) = \sum_i q_Y(\alpha_i)(d\lambda_i)\otimes\xi_i
= \sum_i \big(\alpha_i(\lambda_i)-\alpha_i(1)\lambda_i\big)\otimes\xi_i \]
and this is the image of $\tilde hb_\ast$.
\end{proof}

\begin{prop}\label{prop:tau-inv-AR-one}
For $\zeta\in\Ext^1(\tau^-X,Y)$ and $f\in\Hom(Y,X)$ we have
\[ \{\zeta,\tau^-f\}' = -\Ev(j_Xfg) = -\tr(fg) \quad\textrm{where } \zeta=g\cdot\mathbb T_X. \]
\end{prop}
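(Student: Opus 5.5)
We compute $\{\zeta,\tau^-f\}'$ using Proposition~\ref{prop:AR-two}, with the roles there played by $\tau^-X$ (in place of $X$), $Y$, and the morphism $\tau^-f=f\otimes 1\colon Y\otimes\Pi_1\to X\otimes\Pi_1$.

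\textbf{Step 1: apply Proposition~\ref{prop:AR-two}.} Write $\zeta=\mathbb I_Y\cdot\tilde h$ with $\tilde h\colon\tau^-X\to\Hom(\Omega,Y)$. Then
\[ \{\zeta,\tau^-f\}'=\Ev\big(\tilde h\,(f\otimes1)\,b_\ast\big). \]
Here $b_\ast\colon Y\otimes\mho\to Y\otimes\Pi_1$ is $1\otimes b$. By naturality, $(f\otimes1)(1\otimes b)=(1\otimes b)(f\otimes1_\mho)$. So the composite equals $\tilde h b_\ast\cdot(f\otimes 1_\mho)$, where now $b_\ast$ denotes $1\otimes b$ on $X\otimes\mho$.

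\textbf{Step 2: compute the evaluation via Lemma~\ref{lem:ev-map}(3).} For a morphism $k\colon Y\otimes\mho\to\Hom(\Omega,Y)$, Lemma~\ref{lem:ev-map}(3) computes $\Ev(k)$ by passing through $\Hom(\Omega,Y)\otimes\Omega\otimes DY$, applying $\varepsilon\otimes1$ to land in $Y\otimes DY$, and then taking the trace. Precomposition with $f\otimes1_\mho$ corresponds, under
\[ \Hom(X\otimes\mho,\Hom(\Omega,Y))\cong\Hom(\Omega,Y)\otimes\Omega\otimes DX, \]
to applying $1\otimes1\otimes Df$. I will check this using the explicit formula $x\otimes\gamma\mapsto y\Phi_L(\gamma)(d\lambda\otimes[\xi,x])$ together with $[\xi,f(y)]=[(Df)\xi,y]$. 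Since $\varepsilon\otimes1$ commutes with $Df$, we obtain
\[ \{\zeta,\tau^-f\}'=\tr\big((1\otimes Df)(\varepsilon\otimes1)(\tilde hb_\ast)\big). \]
Here $\tr\colon Y\otimes DY\to k$ is $y\otimes\eta\mapsto\eta(y)$, so $\tr\big((1\otimes Df)(u)\big)$ equals the pairing $y\otimes\xi\mapsto\xi(f(y))$ applied to $u\in Y\otimes DX$.

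\textbf{Step 3: apply the preceding lemma.} The preceding lemma says that $(\varepsilon\otimes1)(\tilde hb_\ast)$ and the image of $g$ in $Y\otimes DX$ differ by a sign. Hence
\[ \{\zeta,\tau^-f\}'=-\sum \xi_s\big(f(y_s)\big), \]
where $g=\sum y_s\otimes_A\xi_s$ in $Y\otimes_ADX$. The pairing $y\otimes\xi\mapsto\xi(f(y))$ already factors through $Y\otimes_ADX$. Under $\Hom(X\otimes_A\Lambda,Y)\cong\Hom_A(X,Y)\cong Y\otimes_ADX$, this sum is exactly $\tr(fg)$ with $fg\in\End_A(X)$. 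Finally, $\tr(fg)=\Ev(j_Xfg)$ by Lemma~\ref{lem:ev-map}(4), which gives the stated formula. Independence of the choice of $g$ is guaranteed by Lemma~\ref{lem:T_X-pairing}.

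\textbf{Main obstacle.} The step I expect to be hardest is Step 2: verifying that precomposition by $f\otimes1_\mho$ matches $1\otimes Df$ under the isomorphism of the earlier lemma, and that the trace in Lemma~\ref{lem:ev-map}(3) is compatible with this naturality. Because $\Hom(\Omega,Y)$ is not of the form appearing in part (2) symmetrically, I would first reduce to pure tensors $\alpha\otimes d\lambda\otimes\xi$ and check the identity directly from the explicit formulas.
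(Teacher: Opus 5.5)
Your argument is correct and uses the same ingredients as the paper: Proposition~\ref{prop:AR-two}, Lemma~\ref{lem:ev-map}~(3) and~(4), and the preceding sign lemma. The only difference is how you handle $f$. The paper first uses bifunctoriality to write $\{\zeta,\tau^-f\}'=\{f\zeta,\id\}'$ and applies the sign lemma to $f\zeta=fg\cdot\mathbb T_X$, so everything lands directly in $X\otimes DX$. You instead keep $\zeta$, commute $f\otimes1$ past $b$, and check by hand that precomposition with $f\otimes1_\mho$ corresponds to $1\otimes1\otimes Df$. That check is valid via $[\xi,f(y)]=[(Df)\xi,y]$; it re-verifies, in explicit formulas, the instance of bifunctoriality that the paper gets for free.
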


\begin{proof}
We begin by noting that $\{\zeta,\tau^-f\}'=\{f\zeta,\id\}'$ by the bifunctoriality of the Auslander--Reiten pairing. Writing $f\zeta=\mathbb I_X\cdot\tilde h$ for some $\tilde h\colon\tau^-X\to\Hom(\Omega,X)$, we then have $\{f\zeta,\id\}'=\Ev(\tilde hb_\ast)=\tr(\varepsilon(\tilde hb_\ast))$ by \intref{Proposition}{prop:AR-two} and \intref{Lemma}{lem:ev-map} (3).

On the other hand, if $\zeta=g\cdot\mathbb T_X$, then \intref{Lemma}{lem:T_X-pairing} gives $\Ev(jfg)=\tr(fg)$. Finally, $f\zeta=fg\cdot\mathbb T_X$, so the previous lemma tells us that the image of $fg$ in $X\otimes DX$ equals $-\varepsilon(\tilde hb_\ast)$, from which the result follows.
\end{proof}

We next want to compute the Auslander--Reiten pairing using a slightly different projective presentation, coming from the epimorphism $X\otimes_A\mho\twoheadrightarrow X\otimes\Pi_1$.

\begin{lem}
The following is an exact commutative diagram between projective presentations of $\tau^-X$
\[ \begin{tikzcd}
\mathbb T'_X \; \colon &[-30pt]
(X\otimes\Omega\otimes\mho)\!\oplus\!(X\otimes_A\Lambda\otimes_A\Lambda)
\arrow[r,"{(i_X\otimes1,1\otimes_A\tilde c)}"] \arrow[d,"{(0,p_X\otimes_A1)}"] &[25pt]
X\otimes_A\mho \arrow[r,"p_X\otimes b"] \arrow[d,"p_X\otimes1"] &[-2pt] X\otimes\Pi_1 \arrow[r] \arrow[d,equal] &[-6pt] 0\\
\mathbb T_X \; \colon &[-30pt]
X\otimes_A\Lambda \arrow[r,"1\otimes\tilde c"] & X\otimes\mho \arrow[r,"1\otimes b"] & X\otimes\Pi_1 \arrow[r] & 0
\end{tikzcd} \]
\end{lem}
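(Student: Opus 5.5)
We have to check three things: that every term in the top row is projective, that both squares commute, and that the top row is exact. Here $X\otimes_A\mho$ means $X\otimes_A\Lambda\otimes\mho\cong X\otimes_A\mho$. The map $p_X\otimes b$ is the composite $X\otimes_A\mho\xrightarrow{p_X\otimes 1}X\otimes\mho\xrightarrow{1\otimes b}X\otimes\Pi_1$. The map $i_X\otimes1$ is $X\otimes\Omega\otimes\mho\to X\otimes_A\Lambda\otimes\mho=X\otimes_A\mho$, and $1\otimes_A\tilde c$ is $X\otimes_A(\Lambda\otimes_A\Lambda)\to X\otimes_A\mho$.

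\emph{Projectivity.} The module $X\otimes_A\mho=(X\otimes_A\Lambda)\otimes\mho$ is projective by the lemma that $Z\otimes\mho$ is always projective. The same lemma applied to $Z=X\otimes\Omega$ shows that $X\otimes\Omega\otimes\mho$ is projective. Finally, $X\otimes_A\Lambda\otimes_A\Lambda$ is projective because $A$ is semisimple.

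\emph{Commutativity.} The right square commutes by the definition of $p_X\otimes b$ just given. For the left square, consider first the component on $X\otimes\Omega\otimes\mho$. Here $(p_X\otimes1)(i_X\otimes1)=(p_Xi_X)\otimes1=0$, which matches the $0$ in the vertical map. For the component on $X\otimes_A\Lambda\otimes_A\Lambda$, both composites send $x\otimes_A\lambda\otimes_A\mu$ to $x\lambda\otimes\tilde c(1\otimes_A\mu)=x\otimes\lambda\mu c$. This holds because $\tilde c$ is a bimodule map, so $\tilde c(\lambda\otimes_A\mu)=\lambda c\mu$, and moving $\lambda$ across $\otimes_\Lambda$ is allowed.

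\emph{Exactness.} The map $p_X\otimes b$ is surjective, being a composite of two surjections. The vertical map $p_X\otimes1$ is surjective, and its kernel is the image of $\mathbb P_X\otimes\mho$, namely $(i_X\otimes1)(X\otimes\Omega\otimes\mho)$. This uses that $\mathbb P_X\otimes\mho$ is exact, since $\mathbb P$ splits as left modules. Now take $z$ in the kernel of $p_X\otimes b$. Its image $(p_X\otimes1)(z)$ lies in the kernel of $1\otimes b$, which equals the image of $1\otimes\tilde c$ by exactness of $\mathbb T_X$. Exactness of $\mathbb T_X$ follows from right exactness of $X\otimes-$ applied to $\mathbb T$. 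So $(p_X\otimes1)(z)=(1\otimes\tilde c)(w)$ for some $w\in X\otimes_A\Lambda$. Since $p_X\otimes_A1$ is surjective, lift $w$ to $w'\in X\otimes_A\Lambda\otimes_A\Lambda$. By commutativity of the left square, $z-(1\otimes_A\tilde c)(w')$ lies in the kernel of $p_X\otimes1$, hence in the image of $i_X\otimes1$. Therefore $z$ lies in the image of the top left map. Conversely, the composite of the two top maps is zero: this follows from commutativity together with the fact that $\mathbb T_X$ is a complex.

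The main obstacle is bookkeeping rather than any conceptual step. One must identify $X\otimes_A\mho$ with $X\otimes_A\Lambda\otimes\mho$ correctly, and confirm that $1\otimes_A\tilde c$ is compatible with $p_X\otimes_A1$ under the bimodule structure of $\tilde c$. The diagram chase itself is a standard "two presentations" argument.
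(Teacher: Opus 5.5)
Your proposal is correct and is essentially the paper's argument. The paper sets up the $3\times3$ diagram with rows $\mathbb P_X\otimes_A\Lambda$, $\mathbb P_X\otimes\mho$, $\mathbb P_X\otimes\Pi_1$ and columns $X\otimes\Omega\otimes\mathbb T$, $X\otimes_A\mathbb T$, $X\otimes\mathbb T$, and says exactness follows by a diagram chase; that is precisely the chase you carry out, using the middle row, the right column and surjectivity of $p_X\otimes_A1$, and you additionally check projectivity of the terms. One small slip: in the left square the common value is $x\lambda\otimes c\mu=x\otimes\lambda c\mu$, not $x\otimes\lambda\mu c$, since $c$ need not commute with $\mu$.
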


\begin{proof}
We have the exact commutative diagram
\[ \begin{tikzcd}
0 \arrow[r] & X\otimes\Omega\otimes_A\Lambda \arrow[r] \arrow[d] & X\otimes_A\Lambda\otimes_A\Lambda \arrow[r] \arrow[d] &
X\otimes_A\Lambda \arrow[r] \arrow[d] & 0\\
0 \arrow[r] & X\otimes\Omega\otimes\mho \arrow[r] \arrow[d] & X\otimes_A\mho \arrow[r] \arrow[d] & X\otimes\mho \arrow[r] \arrow[d] & 0\\
& X\otimes\Omega\otimes\Pi_1 \arrow[r] \arrow[d] & X\otimes_A\Pi_1 \arrow[r] \arrow[d] & X\otimes\Pi_1 \arrow[r] \arrow[d] & 0\\
&0&0&0
\end{tikzcd} \]
where the rows are $\mathbb P_X\otimes_A\Lambda$, $\mathbb P_X\otimes\mho$ and $\mathbb P_X\otimes\Pi_1$, and the columns are $X\otimes\Omega\otimes\mathbb T$, $X\otimes_A\mathbb T$ and $X\otimes\mathbb T$. The exactness of the sequence $\mathbb T'_X$ follows from a diagram chase, and the commutativity of the diagram is clear.
\end{proof}

We now need an analogue of \intref{Lemma}{lem:T_X-pairing} for $\mathbb T'_X$. For this we define
\[ j'=(j'_1,-j'_2)^t \colon X \to (X\otimes\Omega\otimes\mho\otimes D\Lambda)\oplus(X\otimes_A\Lambda\otimes_AD\Lambda), \]
where $j'_1$ corresponds to the identity on $X\otimes\Omega$ under the isomorphism
\[ \Hom(X,X\otimes\Omega\otimes\mho\otimes D\Lambda) \cong \Hom(X,\Hom(\Omega,X\otimes\Omega)) \cong \End(X\otimes\Omega), \]
and $j'_2$ corresponds to the map $X\to\Hom_A(\Lambda,X\otimes_A\Lambda)$, $j'_2(x)\colon\lambda\mapsto x\lambda\otimes_A1$, under the isomorphism
\[ X\otimes_A\Lambda\otimes_AD\Lambda \cong \Hom_A(\Lambda,X\otimes_A\Lambda). \]

\begin{lem}
There is a well-defined pairing
\[ \Hom(Y,X)\times\Ext^1(\tau^-X,Y) \to k, \quad (f,\zeta) \mapsto \Ev(j'fg), \]
where $\zeta=g\cdot\mathbb T'_X$.
\end{lem}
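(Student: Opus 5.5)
Two things need checking. First, $\Ev(j'fg)$ must make sense. Second, it must vanish whenever $\zeta=g\cdot\mathbb T'_X$ splits. I will follow the proof of \intref{Lemma}{lem:T_X-pairing} closely.

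Write $g=(g_1,g_2)$ with $g_1\colon X\otimes\Omega\otimes\mho\to Y$ and $g_2\colon X\otimes_A\Lambda\otimes_A\Lambda\to Y$. Both summands of the first term of $\mathbb T'_X$ are projective. Indeed $X\otimes\Omega$ is projective, so $X\otimes\Omega\otimes\mho$ is a summand of sums of $\Lambda\otimes\mho$, which is projective; and $X\otimes_A\Lambda\otimes_A\Lambda$ is projective since $A$ is semisimple. The dual of the $\Lambda$-dual of these summands identifies with the targets of $j'_1$ and $j'_2$, namely $X\otimes\Omega\otimes\mho\otimes D\Lambda$ and $X\otimes_A\Lambda\otimes_AD\Lambda$. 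So $j'fg$ is a map from the projective $P=(X\otimes\Omega\otimes\mho)\oplus(X\otimes_A\Lambda\otimes_A\Lambda)$ to $D\Hom(P,\Lambda)$, and $\Ev(j'fg)=\Ev(j'_1fg_1)-\Ev(j'_2fg_2)$ is defined. Any two choices of $g$ with $g\cdot\mathbb T'_X=\zeta$ differ by a map factoring through the first morphism $\partial\coloneqq(i_X\otimes1,1\otimes_A\tilde c)$ of $\mathbb T'_X$. Since $\Ev(j'fg)$ is linear in $g$, well-definedness reduces to showing $\Ev(j'fh\partial)=0$ for every $h\colon X\otimes_A\mho\to Y$.

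For this I apply \intref{Lemma}{lem:ev-map}~(1) with $P$ as above and $Q=X\otimes_A\mho$, which is projective since $\mho\cong\Lambda\otimes\mho$-type terms are projective as left-$A$ tensors. This gives
\[ \Ev(j'fh\partial)=\Ev\big(D\Hom(\partial,\Lambda)\,j'fh\big). \]
The key claim is that $D\Hom(\partial,\Lambda)\circ j'=0$ as a map $X\to D\Hom(X\otimes_A\mho,\Lambda)$. This is the analogue of $q_Xj_X=0$ in \intref{Lemma}{lem:T_X-pairing}, and it is where the sign $-j'_2$ enters.

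To verify the claim I compute both components. The component through $1\otimes_A\tilde c$ uses \intref{Lemma}{lem:tau-new} tensored appropriately: $D\Hom(1\otimes_A\tilde c,\Lambda)$ corresponds to $i\otimes1$, sending $j'_2(x)$ to the element built from $x\lambda\otimes_A1\otimes\ldots$, that is, essentially $i_X$ applied to the canonical element. The component through $i_X\otimes1$ sends $j'_1(x)$, the identity of $X\otimes\Omega$ viewed in $X\otimes\Omega\otimes\mho\otimes D\Lambda$, to $(i_X\otimes1\otimes1)(\id)$ inside $X\otimes_A\mho\otimes D\Lambda\cong D\Hom(X\otimes_A\mho,\Lambda)$. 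Both images should be expressed as elements of $\Hom(\mho,X\otimes_A\mho)$-type objects and evaluated on $x\otimes\gamma$. Each gives the same expression, $x\otimes_A\lambda\ldots-x\lambda\otimes_A\ldots$ paired through $\Phi_L$. Their difference therefore vanishes, which is exactly the commutativity of the square $X\otimes\Omega\otimes\mathbb T$ versus $\mathbb P_X\otimes\mho$ used in the preceding lemma.

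This explicit element chase is the main obstacle: it requires tracking the identifications through $\Phi_L$ and the symmetrising form, and getting the sign right. I would attempt it on pure tensors $x\otimes d\lambda\otimes\gamma$, reusing the identity $\lmods{[\eta,y]}(\lambda)=\rmods{\lambda\eta}(y)$ from the proof of \intref{Lemma}{lem:tau-new}.
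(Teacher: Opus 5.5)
Your reduction matches the paper's proof. Linearity in $g$ reduces well-definedness to the case $g=h\partial$ with $\partial=(i_X\otimes1,1\otimes_A\tilde c)$. Then \intref{Lemma}{lem:ev-map}~(1) with $Q=X\otimes_A\mho$ turns $\Ev(j'fh\partial)$ into $\Ev(D\Hom(\partial,\Lambda)j'fh)$, and everything rests on the identity $D\Hom(\partial,\Lambda)j'=0$, which is true. But that identity is the entire content of the lemma, and you have not proved it. The sentence ``each gives the same expression'' is a prediction, and you say yourself that the chase is deferred. Also, the bookkeeping you propose for the chase does not fit the spaces involved: you suggest treating both images as elements of $\Hom(\mho,X\otimes_A\mho)$-type objects, evaluated on $x\otimes\gamma$ through $\Phi_L$. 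As a result the sign, which is the only delicate point, is not under control.

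The paper closes this step without any pure-tensor computation through $\Phi_L$ or $\sigma$. Apply the remark exchanging $\Omega$ and $\mho$, with $X\otimes_A\Lambda$ and $\Lambda$ in the roles of $Y$ and $X$. This gives $D\Hom(X\otimes_A\mho,\Lambda)\cong X\otimes_A\mho\otimes D\Lambda\cong\Hom(\Omega,X\otimes_A\Lambda)$. So both components become maps $X\to\Hom(\Omega,X\otimes_A\Lambda)$, that is, maps $X\otimes\Omega\to X\otimes_A\Lambda$.

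The identification $Z\otimes\mho\otimes D\Lambda\cong\Hom(\Omega,Z)$ is natural in $Z$. Hence $D\Hom(i_X\otimes1,\Lambda)$ is post-composition with $i_X$, and $(i_X\otimes1)j'_1$ corresponds to $i_X\circ\id_{X\otimes\Omega}=i_X$.

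For the other component, dualise \intref{Lemma}{lem:tau-new} and apply it with $\Lambda$ and $X\otimes_A\Lambda$ in place of $X$ and $Y$. Then $D\Hom(1\otimes_A\tilde c,\Lambda)$ becomes $\Hom(i,X\otimes_A\Lambda)=-q_{X\otimes_A\Lambda}$. This sends $j'_2(x)\colon\lambda\mapsto x\lambda\otimes_A1$ to $d\lambda\mapsto x\otimes_A\lambda-x\lambda\otimes_A1$, which is again $i_X$.

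Hence $D\Hom(\partial,\Lambda)j'=(i_X\otimes1)j'_1-(1\otimes_A\tilde c\otimes1)j'_2=0$. The cancellation comes from two signs together: the convention $q=-\Hom(i,-)$ and the sign in $j'=(j'_1,-j'_2)^t$. That is why this step cannot be left as ``essentially $i_X$''.
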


\begin{proof}
As in the proof of \intref{Lemma}{lem:T_X-pairing} we need to show that if we have a factorisation $g=h(i_X\otimes1,1\otimes_A\tilde c)$, then $\Ev(j'fg)=0$. Now, applying $D\Hom(-,\Lambda)\cong-\otimes D\Lambda$ to the map $(i_X\otimes1,1\otimes_A\tilde c)$ we get from \intref{Lemma}{lem:ev-map} (1) that
\[ \Ev(j'fh(i_X\otimes1,1\otimes_A\tilde c)) = \Ev((i_X\otimes1,1\otimes_A\tilde c\otimes1)j'fh). \]
The composite $(i_X\otimes1)j'_1$ goes from $X$ to $X\otimes_A\mho\otimes D\Lambda\cong\Hom(\Omega,X\otimes_A\Lambda)$, and corresponds to the map $i_X\colon X\otimes\Omega\to X\otimes_A\Lambda$, sending $x\otimes d\lambda$ to $x\otimes_A\lambda-x\lambda\otimes_A1$.

On the other hand $1\otimes_A\tilde c\otimes1$ corresponds to
\[ D\Hom(1\otimes_A\tilde c,\Lambda) \colon D\Hom_A(X\otimes_A\Lambda,\Lambda) \to D\Hom(X\otimes_A\mho,\Lambda), \]
and thus, as in \intref{Lemma}{lem:tau-new}, to $\Hom(i,X\otimes_A\Lambda)=-q_{X\otimes_A\Lambda}$. It follows that $(1\otimes_A\tilde c\otimes1)j'_2$, when viewed as a morphism $X\to\Hom(\Omega,X\otimes_A\Lambda)$, sends $x$ to the map $d\lambda\mapsto x\otimes_A\lambda-x\lambda\otimes_A1$.

We conclude that $(i_X\otimes1)j'_1$ and $(1\otimes_A\tilde c\otimes1)j'_2$ agree as maps $X\otimes\Omega\to X\otimes_A\Lambda$, so that $(i_X\otimes1,1\otimes_A\tilde c\otimes1)j'=0$.
\end{proof}

We can now compute the Auslander--Reiten pairing using the presentation $\mathbb T'_X$.

\begin{prop}\label{prop:tau-inv-AR-two}
For $\zeta\in\Ext^1(\tau^-X,Y)$ and $f\in\Hom(Y,X)$ we have
\[ \{\zeta,\tau^-f\}' = \Ev(j'fg) \quad\textrm{where } \zeta=g\cdot\mathbb T'_X. \]
\end{prop}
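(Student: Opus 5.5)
We reduce to \intref{Proposition}{prop:tau-inv-AR-one}, which already gives $\{\zeta,\tau^-f\}'=-\Ev(j_Xfg_0)=-\tr(fg_0)$ whenever $\zeta=g_0\cdot\mathbb T_X$. So it suffices to compare the two presentations. By the preceding lemma there is a morphism of projective presentations $\mathbb T'_X\to\mathbb T_X$ which is the identity on $\tau^-X$. Hence if $\zeta=g_0\cdot\mathbb T_X$ with $g_0\colon X\otimes_A\Lambda\to Y$, then also $\zeta=g\cdot\mathbb T'_X$ with $g=g_0\circ(0,p_X\otimes_A1)$. By the well-definedness lemma for $\mathbb T'_X$, the value $\Ev(j'fg)$ does not depend on the choice of $g$ representing $\zeta$. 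We may therefore take $g$ of this special form, and the claim reduces to the identity
\[ \Ev\big(j'fg_0(0,p_X\otimes_A1)\big) = -\Ev(j_Xfg_0) = -\tr(fg_0). \]

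The first summand contributes nothing, because $g$ vanishes on $X\otimes\Omega\otimes\mho$; only the component $-j'_2$ survives. By \intref{Lemma}{lem:ev-map} (1), applied to $p_X\otimes_A1\colon X\otimes_A\Lambda\otimes_A\Lambda\to X\otimes_A\Lambda$, we can move $p_X\otimes_A1$ across and obtain
\[ \Ev\big(j'_2fg_0(p_X\otimes_A1)\big)=\Ev\big((p_X\otimes_A1\otimes_A1)\,j'_2\,fg_0\big), \]
where the outer map is $D\Hom(p_X\otimes_A1,\Lambda)\cong(p_X\otimes_A1)\otimes D\Lambda$. The key step is then to check that
\[ (p_X\otimes_A1\otimes_A1)\circ j'_2 = j_X\colon X\to X\otimes_A D\Lambda\cong\Hom_A(\Lambda,X). \]
Under $X\otimes_A\Lambda\otimes_AD\Lambda\cong\Hom_A(\Lambda,X\otimes_A\Lambda)$, the map $j'_2(x)$ is $\lambda\mapsto x\lambda\otimes_A1$. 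Composing with $p_X$ gives $\lambda\mapsto x\lambda$, which is exactly $j_X(x)$. Granting this,
\[ \Ev(j'fg)=-\Ev\big(j'_2fg_0(p_X\otimes_A1)\big)=-\Ev(j_Xfg_0)=-\tr(fg_0)=\{\zeta,\tau^-f\}', \]
using \intref{Proposition}{prop:tau-inv-AR-one} for the last equality.

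The main obstacle is bookkeeping in the application of \intref{Lemma}{lem:ev-map} (1). That lemma concerns maps $P\to Q\to D\Hom(P,\Lambda)$, so we must check that the identifications $D\Hom(P,\Lambda)\cong P\otimes D\Lambda$ are compatible with the maps used to define $j'_2$ and $j_X$. In particular, the isomorphism $X\otimes_A\Lambda\otimes_AD\Lambda\cong\Hom_A(\Lambda,X\otimes_A\Lambda)$ used for $j'_2$ must match, after applying $p_X\otimes1$, the isomorphism $\Hom_A(\Lambda,X)\cong D\Hom(X\otimes_A\Lambda,\Lambda)$ used in \intref{Lemma}{lem:T_X-pairing}. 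Both come from the same natural isomorphism $-\otimes D\Lambda\cong D\Hom(-,\Lambda)$ on projectives, so naturality in the projective argument should settle this. We must also check that the sign in $j'=(j'_1,-j'_2)^t$ is the one that produces the minus sign, which the computation above confirms.
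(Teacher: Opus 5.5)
Your proposal is correct and follows the paper's proof essentially step for step. You realise $\zeta$ (equivalently $f\zeta$) through $\mathbb T'_X$ by precomposing a $\mathbb T_X$-representative with $(0,p_X\otimes_A1)$, invoke well-definedness of the $\mathbb T'_X$-pairing, move $p_X\otimes_A1$ across via \intref{Lemma}{lem:ev-map}~(1), and conclude from $\Hom_A(\Lambda,p_X)j'_2=j_X$ and \intref{Proposition}{prop:tau-inv-AR-one}. The differences are only cosmetic: the paper works directly with a representative $h$ of $f\zeta$ rather than $fg_0$, and it applies \intref{Lemma}{lem:ev-map}~(1) to the whole map $(0,p_X\otimes_A1)$ at once instead of first isolating the diagonal block $-j'_2$.
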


\begin{proof}
Take $h\colon X\otimes_A\Lambda\to X$ such that $f\zeta=h\cdot\mathbb T_X$. Then also $f\zeta=h(0,p_X\otimes_A1)\cdot\mathbb T'_X$. Using \intref{Proposition}{prop:tau-inv-AR-one} it is enough to show that $\Ev(j'h(0,p_X\otimes_A1))=-\tr(h)$. Applying $D\Hom(-,\Lambda)\cong-\otimes D\Lambda$ to $p_X\otimes_A1$ gives the map $X\otimes_A\Lambda\otimes_AD\Lambda\to X\otimes_AD\Lambda$, or equivalently the map $\Hom_A(\Lambda,p_X)\colon\Hom_A(\Lambda,X\otimes_A\Lambda)\to\Hom_A(\Lambda,X)$. Now $\Hom_A(\Lambda,p_X)j'_2=j_X$, so by \intref{Lemma}{lem:ev-map}, $\Ev(j'h(0,p_X\otimes_A1))=-\Ev(j_Xh)=-\tr(h)$ as required.
\end{proof}

Finally, we can prove the $\tau$-invariance of the Auslander--Reiten pairing.

\begin{thm}
Assume $\Hom(D\Lambda,X)=0$. Then, given $\zeta\in\Ext^1(X,Y)$ and $f\colon\tau^-Y\to X$, we have
\[ \{\zeta,f\}' = \{\tau^-\zeta,\tau^-f\}'. \]
\end{thm}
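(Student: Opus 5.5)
The plan is to reduce to \intref{Proposition}{prop:tau-inv-AR-two}, applied with $Y$ replaced by $\tau^-Y$. In that proposition take $X$ as it is and $\tau^-Y$ in place of $Y$, and use the given $f\colon\tau^-Y\to X$. Then $\tau^-\zeta\in\Ext^1(\tau^-X,\tau^-Y)$ and $\tau^-f\colon\tau^-\tau^-Y\to\tau^-X$, so
\[ \{\tau^-\zeta,\tau^-f\}'=\Ev(j'fg'),\qquad \tau^-\zeta=g'\cdot\mathbb T'_X. \]
On the other side, \intref{Proposition}{prop:AR-two} gives $\{\zeta,f\}'=\Ev(\tilde g\,fb_\ast)$, where we write $\zeta=g\cdot\mathbb P_X=\mathbb I_Y\cdot\tilde g$ using \intref{Lemma}{lem:proj-inj-comparison}. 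So it suffices to find $g'$ explicitly in terms of $g$ and then compare the two evaluations.

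\textbf{Step 1: compute $g'$.} Apply $-\otimes\Pi_1$ to the pushout diagram from $\mathbb P_X$ to $\zeta$. The hypothesis $\Hom(D\Lambda,X)=0$ ensures that $\tau^-\zeta=\zeta\otimes\Pi_1$ is still short exact. This produces a morphism from the row $\mathbb P_X\otimes\Pi_1$ in the $3\times3$ diagram of the lemma preceding \intref{Proposition}{prop:tau-inv-AR-two} to $\tau^-\zeta$. Composing with the columns $X\otimes\Omega\otimes\mathbb T$ and $X\otimes_A\mathbb T$ gives a morphism of presentations $\mathbb T'_X\to\tau^-\zeta$.

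Its first component should be
\[ g'=\big((1\otimes b)(g\otimes1),\,0\big)\colon (X\otimes\Omega\otimes\mho)\oplus(X\otimes_A\Lambda\otimes_A\Lambda)\to Y\otimes\Pi_1. \]
The second summand maps to zero because it factors through $X\otimes_A\Lambda\otimes_A\Lambda\to X\otimes_A\mho\to X\otimes_A\Pi_1$, which is zero by exactness of $\mathbb T$. I would verify commutativity with the middle map $p_X\otimes b$ directly.

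\textbf{Step 2: reduce the evaluation.} Since the second component of $g'$ vanishes, the minus sign in $j'=(j'_1,-j'_2)^t$ plays no role, and
\[ \Ev(j'fg')=\Ev\big(j'_1\,(fb_\ast)\,(g\otimes1)\big). \]
Here $fb_\ast\colon Y\otimes\mho\to X$ and $g\otimes1\colon X\otimes\Omega\otimes\mho\to Y\otimes\mho$, both between finite dimensional projectives. I will then use \intref{Lemma}{lem:ev-map}~(1), with $P=X\otimes\Omega\otimes\mho$ and $Q=Y\otimes\mho$, to move $g\otimes1$ past $j'_1$. This gives $\Ev\big(D\Hom(g\otimes1,\Lambda)\,j'_1\,fb_\ast\big)$.

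Under the identifications $X\otimes\Omega\otimes\mho\otimes D\Lambda\cong\Hom(\Omega,X\otimes\Omega)$ and $Y\otimes\mho\otimes D\Lambda\cong\Hom(\Omega,Y)$, the map $D\Hom(g\otimes1,\Lambda)$ becomes $\Hom(\Omega,g)$. Recall that $j'_1$ corresponds to $\id_{X\otimes\Omega}$. Hence $\Hom(\Omega,g)j'_1$ corresponds to $g$, that is, it equals $\tilde g\colon X\to\Hom(\Omega,Y)$. Therefore
\[ \Ev(j'fg')=\Ev(\tilde g\,fb_\ast)=\{\zeta,f\}'. \]

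\textbf{Main obstacle.} The main obstacle is Step 1 together with the naturality bookkeeping in Step 2. In Step 1 one must check that the lift to $\mathbb T'_X$ really is $\big((1\otimes b)(g\otimes1),0\big)$ and carries no correction term or sign. In Step 2 one must check that the isomorphism $D\Hom(-,\Lambda)\cong-\otimes D\Lambda$ used to define $j'_1$ is compatible with $\Hom(\Omega,g)$. I would first test Step 2 on pure tensors, as in the proof of \intref{Lemma}{lem:ev-map}~(2), to confirm that no sign appears.
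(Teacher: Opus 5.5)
Your proposal is correct and follows essentially the same route as the paper. The paper also writes $\tau^-\zeta=(g\otimes b,0)\cdot\mathbb T'_X$ and applies \intref{Proposition}{prop:tau-inv-AR-two}. It then factors $(g\otimes b,0)=b_\ast(g\otimes1,0)$ and uses \intref{Lemma}{lem:ev-map}~(1) together with $\Hom(\Omega,g)j'_1=\tilde g$ to arrive at $\Ev(\tilde g fb_\ast)=\{\zeta,f\}'$. The only difference is cosmetic: you build the lift from $\mathbb T'_X$ directly into $\tau^-\zeta$ via the $3\times3$ diagram, whereas the paper maps $\mathbb T'_X$ into $\tau^-\mathbb P_X$ (components $(1\otimes b,0)$ and $1\otimes_Ab$) and then pushes out along $\tau^-g$.
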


\begin{proof}
We begin by writing $\zeta=g\cdot\mathbb P_X$ for some $g\colon X\otimes\Omega\to Y$. We also set $h\coloneqq fb_\ast\in\Hom(Y\otimes\mho,X)$. Then $\{\zeta,f\}'=\Ev(\tilde gh)$ by \intref{Proposition}{prop:AR-two}.

Next, $\tau^-\zeta=\tau^-g\cdot\tau^-\mathbb P_X$, which we can compare with $\mathbb T'_X$ via the exact commutative diagram
\[ \begin{tikzcd}
\mathbb T'_X \colon &[-38pt]
(X\otimes\Omega\otimes\mho)\!\oplus\!(X\otimes_A\Lambda\otimes_A\Lambda)
\arrow[r,"{(i_X\otimes1,1\otimes_A\tilde c)}"] \arrow[d,"{(1\otimes b,0)}"] &[25pt]
X\otimes_A\mho \arrow[r,"p_X\otimes b"] \arrow[d,"1\otimes_Ab"] &[-3pt] X\otimes\Pi_1 \arrow[r] \arrow[d,equal] &[-7pt] 0\\
\tau^-\mathbb P_X \colon &[-38pt]
X\otimes\Omega\otimes\Pi_1 \arrow[r,"i_X\otimes1"] & X\otimes_A\Pi_1 \arrow[r,"p_X\otimes1"] & X\otimes\Pi_1 \arrow[r] & 0
\end{tikzcd} \]
so that $\tau^-\zeta=(g\otimes b,0)\cdot\mathbb T'_X$, and thus $\{\tau^-\zeta,\tau^-f\}'=\Ev(j'f(g\otimes b,0))$ by \intref{Proposition}{prop:tau-inv-AR-two}. We now write $(g\otimes b,0)=b_\ast(g\otimes1,0)$, where $g\otimes1\colon X\otimes\Omega\otimes\mho\to Y\otimes\mho$. Then, under the isomorphism
\[ D\Hom(X\otimes\Omega\otimes\mho,\Lambda) \cong X\otimes\Omega\otimes\mho\otimes D\Lambda
\cong \Hom(\Omega,X\otimes\Omega), \]
we have $D\Hom(g\otimes1,\Lambda)=\Hom(\Omega,g)$, and so $\Hom(\Omega,g)j'_1=\tilde g$ as maps $X\to\Hom(\Omega,Y)$. Using \intref{Lemma}{lem:ev-map} (1) we get $\{\tau^-\zeta,\tau^-f\}'=\Ev(\tilde gfb_\ast)=\Ev(\tilde gh)$.
\end{proof}


\begin{thebibliography}{99}
\bibitem{Chase} Chase, S. U., `A generalization of the ring of triangular matrices', Nagoya Math. J. 18 (1961), 13--25.

\bibitem{DR1} Dlab, V., and Ringel, C.M., `On algebras of finite representation type', J. Algebra 33 (1975), 306--394.

\bibitem{DR3} Dlab, V., and Ringel, C.M., `The representations of tame hereditary algebras', in: Representation theory of algebras (Proc. Conf., Temple Univ., Philadelphia, Pa., 1976), 329--353, Lect. Notes Pure Appl. Math. 37 (Dekker, New York, 1978).

\bibitem{Ginzburg} Ginzburg, V., Lectures on Noncommutative Geometry, \url{https://arxiv.org/abs/math/0506603}, 2005.

\bibitem{Hubery} Hubery, A., `On the global dimension and Koszul property for preprojective algebras', \url{https://arxiv.org/abs/2509.21448}, 2025.

\bibitem{Stacks} The Stacks Project Authors, `Stacks Project', \url{https://stacks.math.columbia.edu}, 2026.

\bibitem{Wedderburn} Wedderburn, J. H. M., `On hypercomplex numbers', Proc. London Math. Soc. (2) 6 (1908), 77--118.

\bibitem{Zaks1} Zaks, A., `A note on semi-primary hereditary rings', Pacific J. Math. 23 (1967), 627--628.

\bibitem{Zaks2} Zaks, A., `Semi-primary hereditary rings', Israel J. Math. 6 (1968), 359--362.
\end{thebibliography}
\end{document}